%------------------------------------------------------------------------------
% Here please write the date of submission of paper or its revisions:
%------------------------------------------------------------------------------
%
\documentclass[11pt, reqno]{amsart}

%%%%%%%Begins%%%%Use the first line with .eps. Use the second one when you use .pdf for pictures
%\usepackage{graphicx}
\usepackage{lineno,hyperref}
%%%%%%Ends%%%%%%%%
\usepackage{cases}
\usepackage[square, comma, sort&compress, numbers]{natbib}%
\usepackage{float}
\usepackage{pifont}
\usepackage{bbding}
\usepackage{amssymb}
\usepackage{mathrsfs}
\usepackage{subfigure}
\usepackage{caption}
\usepackage{geometry}
\usepackage{color}
\usepackage{amsmath, amsthm, amscd, amsfonts, amssymb, graphicx, color}
\usepackage{lineno}
\usepackage{graphicx}
\usepackage{epsfig}

\usepackage{subfigure}
\textheight 21.98truecm \textwidth 14.6truecm
\setlength{\oddsidemargin}{0.35in}\setlength{\evensidemargin}{0.35in}
\bibliographystyle{alpha}
\setlength{\topmargin}{-.5cm}

\newtheorem{thm}{Theorem}
\newtheorem{lemma}{Lemma}
\newtheorem{prop}{Proposition}

\newtheorem{cor}{Corollary}

\newtheorem{dfn}{Definition}
\newtheorem{rem}{Remark}

\numberwithin{equation}{section}
\numberwithin{equation}{section}
\numberwithin{dfn}{section}
\numberwithin{lemma}{section}
\numberwithin{theorem}{section}
\numberwithin{thm}{section}
\numberwithin{cor}{section}
\numberwithin{prop}{section}
\numberwithin{rem}{section}

\newcommand{\abs}[1]{\left\vert#1\right\vert}

\newcommand{\N}{\mbox{$\mathbb{N}$}}

\newcommand{\D}{\mbox{$\mathbb{D}$}}

\date{\today}
\begin{document}
\setcounter{page}{1}

\title[Some properties of certain close-to-convex harmonic mappings]
{Some properties of certain close-to-convex harmonic mappings}

\author[X.-Y. Wang, Z.-G. Wang, J.-H. Fan and  Z.-Y. Hu]{Xiao-Yuan Wang, Zhi-Gang Wang$^{*}$, Jin-Hua Fan and  Zhen-Yong Hu}

\vskip.10in
\address{\noindent Xiao-Yuan Wang \vskip.05in
	School of Science, Nanjing University of Science and Technology,
	Nanjing 210094, Jiangsu, P. R.
	China.}

%\address{\noindent  Institute of Sponge City Research, Pingxiang University, Pingxiang 337055, Jiangxi, P. R. China.}
\email{\textcolor[rgb]{0.00,0.00,0.84}{mewangxiaoyuan$@$163.com}}

\address{\noindent Zhi-Gang Wang\vskip.05in
School of Mathematics and Statistics, Hunan
	First Normal University, Changsha 410205, Hunan, P. R. China.}
\vskip.05in
\email{\textcolor[rgb]{0.00,0.00,0.84}{wangmath$@$163.com}}

\address{\noindent Jin-Hua Fan\vskip.05in
     School of Science, Nanjing University of Science and Technology,
	Nanjing 210094, Jiangsu, P. R. China.}\vskip.05in
\email{\textcolor[rgb]{0.00,0.00,0.84}{jinhuafan$@$hotmail.com}}

\address{\noindent Zhen-Yong Hu\vskip.05in
     School of Science, Nanjing University of Science and Technology,
	Nanjing 210094, Jiangsu, P. R. China.}\vskip.05in
\email{\textcolor[rgb]{0.00,0.00,0.84}{huzhenyongad$@$163.com}}
\thanks{$^*$Corresponding author.}

\subjclass[2010]{Primary 30C45, 30C80, 30A10.}

\keywords{Univalent harmonic mappings; close-to-convex harmonic mappings;  Bohr radius; Toeplitz determinant;  Ma-Minda convex function}

\begin{abstract}
In this paper,
 we determine the sharp estimates for Toeplitz determinants of a subclass of close-to-convex harmonic mappings.
Moreover, we obtain an improved version of Bohr's inequalities for a subclass of close-to-convex harmonic mappings,
 whose analytic parts are Ma-Minda convex functions.
\end{abstract}

\vskip.20in

\maketitle

%\tableofcontents

\section{\large\bf {\sc Introduction}}\label{sec1}%
%%-----------------------------------------------------------------------
A complex-valued function $f$ in the unit disk $\mathbb{D}=\{z:|z|<1\}$ is called a harmonic mapping
if $\Delta f=4 f_{z \overline{z}}=0$.
Let $\mathcal{H}$ denote the class of sense-preserving harmonic mappings $f=h +\overline{g}$ in $\mathbb{D}$, where
\begin{equation}\label{eq-1.01}
 \  h(z)=z+\sum_{n=2}^{\infty} a_{n} z^{n}\  \text{and}\   g(z)=\sum_{n=1}^{\infty} b_{n} z^{n}
\end{equation}
 are $analytic$ functions in $\mathbb{D}$.
 Let $\mathcal{S}_{\mathcal{H}}$  be the subclass of $\mathcal{H}$ consisting of univalent mappings.
We observe that $\mathcal{S}_\mathcal{H}$ reduces to the class $\mathcal{S}$ of normalized univalent analytic functions,
 if the co-analytic part $g\equiv0$. Denote by $\mathcal{K}_{\mathcal{H}}$ the close-to-convex subclass of $\mathcal{S}_{\mathcal{H}}$.
 If $b_1 = 0$, then the class $\mathcal{K}_{\mathcal{H}}$ reduces to $\mathcal{K}^{0}_{\mathcal{H}}$.

  Lewy \cite{Lewy-1936} proved that $f=h+\overline{g}$ is locally univalent
 in $\mathbb{D}$ if and only if the Jacobian  $J_{f}=\left|h^{\prime}\right|^{2}-\left|g^{\prime}\right|^{2}\neq 0$ in $\mathbb{D}$.  Noting that  the harmonic
 mapping $f$   is sense-preserving, i.e. $J_{f}>0$ or
  $\left|h^{\prime}\right|>\left|g^{\prime}\right|$ in $\mathbb{D}$.
 At this point, its dilatation $\omega_{f}=g^{\prime} / h^{\prime}$ has the property
$\left|\omega_{f}\right|<1$ in $\mathbb{D}$.  The reader can find much information about planar harmonic
 mappings from \cite{Clunie-Sheil-Small-1984,Duren-2004,Ponnusamy-Rasila-2013}.

Let $\mathcal{P}$ denote the class of analytic functions $p$ in $\mathbb{D}$ of the form
\begin{equation}\label{eq-2.00}
p(z)=1+\sum_{n=1}^{\infty} p_{n} z^{n}
\end{equation}
such that $\operatorname{Re} (p(z))>0$ in $\mathbb{D} .$

Denote by $\mathcal{A}$  the class of analytic functions in $\mathbb{D}$  with $f(0)=f'(0)-1=0$, and $\mathcal{K}(\alpha)$ denotes the class of functions $f \in \mathcal{A}$ such that
\begin{equation}\label{eq-1.01+1}
{\rm Re}\left(1+\frac{z f^{\prime \prime}(z)}{f^{\prime}(z)}\right)>\alpha \quad \left(-\frac{1}{2}\leq\alpha<1;\ z \in \mathbb{D} \right).
\end{equation}
Particularly, the elements in $\mathcal{K}(-1 / 2)$ are close-to-convex but are not necessarily
starlike in $\mathbb{D}$. For $0 \leq \alpha<1$, the elements in $\mathcal{K}(\alpha)$ are known to be convex functions of order $\alpha$ in $\mathbb{D}$.
 For more properties of starlike and convex functions, the reader can refer to the
monographs \cite{Duren-1983,Thomas-Tuneski-Vasudevarao-2018}.

 By making use of the subordination in analytic functions, Ma and Minda  \cite{Ma-Minda-1992}   introduced a more general class
$\mathcal{C}(\phi)$, consisting of functions in $\mathcal{S}$ for which $$1+ \frac{zf''(z)}{f'(z)} \prec \phi (z).$$ Here the function $\phi :\mathbb{D} \rightarrow \mathbb{C}$,
called Ma-Minda function, is analytic and univalent in $\mathbb{D}$ such that $\phi(\mathbb{D})$ has
positive real part, symmetric with respect to the real axis, starlike with respect to $\phi(0)=1$
and $\phi'(0)>0$ (for more details, see \cite{Wani-Swaminathan-2021,Sharma-Raina-2019}). A Ma-Minda function has  the form $$\phi(z)=1+ \sum_{n=1}^{\infty} B_{n}z^{n}.$$
The extremal function $K$  for the class $\mathcal{C}(\phi)$  is given by
\begin{equation} \label{eq-1.02}
K(z)=\int_{0}^{z}\exp \left(\int_{0}^{\zeta} \frac{\phi(t)-1}{t} d t\right) d \zeta \quad (z \in \mathbb{D}),
\end{equation}
which satisfies the condition $$1+\frac{zK''(z)}{K'(z)}=\phi (z).$$

We recall the following natural class
of close-to-convex harmonic mappings   $\mathcal{M}(\alpha,\zeta,n)$,  due to Wang {\it et\ al.} \cite{Wang-Liu-2018} (see also \cite{Ponnusamy-Kaliraj-2015, whlk}).

\begin{dfn}\label{d1}
	{\rm A harmonic mapping $f=h+\overline{g}\in\mathcal{H}$ is said to be in the class $\mathcal{M}(\alpha,\zeta,n)$ if
		$h \in \mathcal{K}(\alpha)$,  for some $\alpha\in \left[-{1}/{2},1\right)$, given by \eqref{eq-1.01+1} and $g$ satisfies the condition
		\begin{equation}\label{eq-2.04}
			g'(z)=\zeta z^nh'(z)
			\quad \left(\zeta\in\mathbb{C}\ {\rm with}\ \abs{\zeta}\leq \frac{1}{2n-1};\, n\in\N:=\{1,2,3,\cdots\}\right).
	\end{equation}}
\end{dfn}

For $n=1$, $\alpha=-{1}/{2}$ and $|\zeta|=1$, the class $\mathcal{M}(-{1}/{2},\zeta,1)$ was introduced by Bharanedhar and Ponnusamy \cite{Bharanedhar-Ponnusamy-2014}. For $n=1$, the class  $\mathcal{M}(\alpha,\zeta,1)$  was studied in \cite{Allu-Halder-2020-2,Sun-Jiang-Rasila-2016}.

In 2020, Allu and Halder \cite{Allu-Halder-2020-2} introduced and investigated the following subclass $\mathcal{H C}(\phi)$
of close-to-convex harmonic mappings.

\begin{dfn}\label{d2}
	{\rm For $\zeta \in \mathbb{C}$ with $|\zeta| \leq 1$, let $\mathcal{H C}(\phi)$  denote the class of harmonic mappings $f=h+\overline{g}$ in $\mathbb{D}$ of the form \eqref{eq-1.01}, whose analytic part $h$ belongs to $\mathcal{C}(\phi)$ and $h^{\prime}(0) \neq 0$, along with the condition $g^{\prime}(z)=\zeta z h^{\prime}(z)$}.
\end{dfn}

Motivated essentially by the classes $\mathcal{M}(\alpha,\zeta,n)$ and $\mathcal{H C}(\phi)$, we define a new subclass $\mathcal{HC}_n(\phi)$ of close-to-convex harmonic mappings as follows:
\begin{dfn}\label{d3}
	{\rm A harmonic mapping $f=h+\overline{g}\in\mathcal{H}$ is said to be in the class $\mathcal{HC}_n(\phi)$ if
	$h\in \mathcal{C}(\phi)$ and $g$ satisfies the condition \eqref{eq-2.04}.}
\end{dfn}

In 2019, Sun $et\ al.$ \cite{Sun-Wang-2019} investigated upper bounds of the third Hankel determinants for  the class $\mathcal{M}(\alpha,1,1)$ of
close-to-convex harmonic mappings.
In recent years, the Toeplitz determinants and Hankel determinants of functions in the
class $\mathcal{S}$ or its subclasses  have attracted many
researchers' attention (see \cite{Babalola-2010,Cudna-Kwon-2019,Kowalczyk-Kwon-2019,Jastrzebski-Kowalczyk-2020,Cho-Kumar-2020,
Dobosz-2021,Lecko-miarowska-2021,Kumar-2021,Lecko-Sim-2020,Janteng-Halim-Darus-2007}).
Among them,  the symmetric Toeplitz determinant $|T_q(n)|$ for subclasses of $\mathcal{S}$ with small  values of $n$
and $q$,  are investigated by \cite{Arif-Raza-2019,Ali-Thomas-Vasudevarao-2018,Radhika-Sivasubramanian-Murugusundaramoorthy-Jahangiri-2016,Ahuja-Khatter-2021,
Wang-Huang-Long-2021,Zhang-Srivastava-2019}.

The symmetric Toeplitz determinant $T_q(n)$  for  analytic functions $f$ is defined
  as follows:
$$
T_{q}(n)[f]:=
\begin{vmatrix}
	a_n & a_{n+1} & \cdots & a_{n+q-1}\\
	a_{n+1} & a_n & \cdots & a_{n+q-2}\\
	\vdots & \vdots & \vdots & \vdots &\\
	a_{n+q-1} & a_{n+q-2} & \cdots & a_{n}
\end{vmatrix},
$$
where $n,q\in\N$ and $a_1=1$. In particular, for functions in starlike and convex classes,
$T_{2}(2)[f],\, T_{3}(1)[f]$ and $T_{3}(2)[f]$ were studied by Ali $et\ al.$ \cite{Ali-Thomas-Vasudevarao-2018}.

Let $ \mathcal{B} $ be the class of analytic functions $ f $ in $\mathbb{D}$ such that $ |f(z)|<1 $
for all $ z\in\mathbb{D} $, and let $ \mathcal{B}_0=\{f\in\mathcal{B} : f(0)=0\} $. In $ 1914 $,
Bohr \cite{Bohr-1914} proved that if $ f\in\mathcal{B} $ is of the form $ f(z)=\sum_{n=0}^{\infty}a_nz^n $,
then the majorant series $ M_f(r)= \sum_{n=0}^{\infty}|a_n||z|^n $ of $ f $ satisfies
\begin{equation}\label{eq-1.03}
	M_{f_0}(r)=\sum_{n=1}^{\infty}|a_n||z|^n\leq 1-|a_0|=d(f(0),\partial f(\mathbb{D}))
\end{equation}
for all $z\in\mathbb{D}$ with $|z|=r\leq 1/3$, where $f_0(z)=f(z)-f(0)$.  Bohr actually
obtained the inequality \eqref{eq-1.03} for $ |z|\leq 1/6 $. Moreover, Wiener, Riesz and Schur,
independently, established the Bohr  inequality \eqref{eq-1.03}   for $ |z|\leq 1/3 $
(known as Bohr radius for the class $ \mathcal{B} $) and proved that $1/3$ is the best possible.

The Bohr phenomenon was reappeared  in the $ 1990 $s due to Dixon \cite{Dixon-1995}. Furthermore, Boas and Khavinson \cite{Boas--Khavinson-1997}
found
bounds for Bohr's radius in any complete Reinhard domains.
Other works one can see \cite{Paulsen-Singh-2004,Blasco-2010,Muhanna-2010,Aizenberg-2000,Aizenberg-Aytuna-Djakov-2000}.
In recent years, Bohr inequality and Bohr radius have become an active research field in geometric function theory
(see \cite{Allu-Halder-2021,Ali-Abdulhadi-Ng-2016,Kayumov-Ponnusamy-2020,Liu-2021,Muhanna-Ali-2014,Liu-Ponnusamy-2021,Ismagilov-Kayumov-Ponnusamy-2020}).
 Furthermore, initiated by the work of \cite{Kayumov-Ponnusamy-Shakirov-2018}, the Bohr's phenomenon for the complex-valued harmonic mappings have been widely studied
 (see \cite{Ahamed-Allu-2021, Allu-Halder-2020-2, Evdoridis-Ponnusamy-Rasila-2019,Huang-Liu-Ponnusamy-2021,
 Kayumov-Ponnusamy-2018,Liu-Ponnusamy-2019}).

In this paper,
 we aim at determining the sharp estimates for Toeplitz determinants of the class $\mathcal{M}(\alpha,\zeta,n)$.
Moreover, we will derive an improved version of Bohr's inequalities for the class $\mathcal{HC}_n(\phi)$.

\section{\large\bf {\sc Preliminary results}}\label{sec2}%
To prove our main results, we need the following lemmas.

\begin{lemma}\label{lem-2.010}
		{\rm(\cite[p. 41]{Duren-1983})}	For a function $p \in \mathcal{P}$ of the form \eqref{eq-2.00}, the sharp
	inequality $\left|p_{n}\right| \leq 2$ holds for each $n \geq 1 .$ Equality holds for the function $$p(z)=\frac{1+z}{1-z}.$$	
\end{lemma}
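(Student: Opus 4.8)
The plan is to exploit the positivity of $\operatorname{Re}(p)$ through an integral representation of the Taylor coefficients $p_n$. First I would fix a radius $r\in(0,1)$ and write $p(re^{i\theta})=u(r,\theta)+iv(r,\theta)$, where $u=\operatorname{Re}(p)>0$ by hypothesis. Viewing $u(r,\cdot)$ as a function on the circle, I would extract its Fourier coefficients directly from the series \eqref{eq-2.00}: since $u=\tfrac12(p+\overline{p})$, one has on $|z|=r$ the expansion $u=1+\tfrac12\sum_{n\ge1}\bigl(p_n r^n e^{in\theta}+\overline{p_n}\,r^n e^{-in\theta}\bigr)$. Reading off the coefficient at frequency $n\ge1$ gives $\frac{1}{2\pi}\int_0^{2\pi}u(r,\theta)e^{-in\theta}\,d\theta=\tfrac12 p_n r^n$, together with the normalization $\frac{1}{2\pi}\int_0^{2\pi}u(r,\theta)\,d\theta=1$. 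This produces the key identity $p_n r^n=\frac{1}{\pi}\int_0^{2\pi}u(r,\theta)e^{-in\theta}\,d\theta$.

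Next I would estimate. Taking absolute values inside the integral and then using the positivity $u>0$ to discard the modulus on $u$, I obtain
\[
|p_n|\,r^n\le \frac{1}{\pi}\int_0^{2\pi}|u(r,\theta)|\,d\theta=\frac{1}{\pi}\int_0^{2\pi}u(r,\theta)\,d\theta=2 .
\]
Letting $r\to 1^-$ then yields the desired sharp bound $|p_n|\le 2$ for every $n\ge1$.

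For sharpness I would verify the extremal function $p(z)=(1+z)/(1-z)$ by hand: its expansion is $p(z)=1+2\sum_{n\ge1}z^n$, so $p_n=2$ for all $n$, confirming that the constant $2$ cannot be improved. An equivalent and perhaps cleaner route would be to invoke the Herglotz representation $p(z)=\int_{\partial\mathbb{D}}\frac{1+\eta z}{1-\eta z}\,d\mu(\eta)$ for some probability measure $\mu$ on $\partial\mathbb{D}$, expand the kernel as $1+2\sum_{n\ge1}\eta^n z^n$, and read off $p_n=2\int_{\partial\mathbb{D}}\eta^n\,d\mu(\eta)$, whence $|p_n|\le 2$ with equality precisely when $\mu$ is a unit point mass. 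The main obstacle is not the estimate itself, which becomes immediate once positivity is used to remove the absolute value, but rather justifying the coefficient–integral identity cleanly; this reduces entirely to correctly extracting Fourier coefficients from the power series of $u$, after which the bound follows at once.
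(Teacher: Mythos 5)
Your proof is correct; note that the paper itself gives no argument for this lemma, simply citing it as Carath\'eodory's classical coefficient bound from Duren's book, and your derivation (extracting the $n$-th Fourier coefficient of $u=\operatorname{Re}p>0$ on $|z|=r$, discarding the modulus by positivity, and letting $r\to 1^-$, or equivalently invoking the Herglotz representation) is precisely the standard proof found in that cited source. One small inaccuracy in your closing remark: equality $|p_n|=2$ for a fixed $n\ge 2$ does \emph{not} force $\mu$ to be a unit point mass --- it holds whenever $\eta^n$ is a single unimodular constant $\mu$-almost everywhere, e.g.\ $p(z)=(1+z^n)/(1-z^n)$ attains $|p_n|=2$ with $\mu$ supported on $n$ points --- but this concerns only the equality characterization, not the sharp bound asserted in the lemma, so your proof of the stated result stands.
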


\begin{lemma}\label{lem-2.011}
	{\rm(\cite[Theorem 1]{Efraimidis-2016})}	Let $p \in \mathcal{P}$ be of the form \eqref{eq-2.00} and $\mu \in \mathbb{C}$. Then
$$
\left|p_{n}-\mu p_{k} p_{n-k}\right| \leq 2 \max \{1,\,|2 \mu-1|\} \quad (1 \leq k \leq n-1).
$$
If $|2 \mu-1| \geq 1$, then the inequality is sharp for the function $$p(z)=\frac{1+z}{1-z}$$ or its rotations. If $|2 \mu-1|<1$, then the inequality is sharp for $$p(z)=\frac{1+z^{n}}{1-z^{n}}$$ or its rotations.
\end{lemma}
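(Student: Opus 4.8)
The plan is to replace the Taylor coefficients of $p$ by the moments of a probability measure through the Herglotz representation, and then to tame the quadratic term by recognising a certain difference of moments as a covariance-type quantity controlled by the Cauchy--Schwarz inequality.

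First I would write each $p\in\mathcal P$ as $p(z)=\int_{|\zeta|=1}\frac{\zeta+z}{\zeta-z}\,d\nu(\zeta)$ for a probability measure $\nu$ on the unit circle, so that $p_m=2A_m$ with $A_m:=\int_{|\zeta|=1}\bar\zeta^{\,m}\,d\nu(\zeta)$, $A_0=1$ and $|A_m|\le 1$. Dividing by $2$, the assertion reduces to
\[
\bigl|A_n-2\mu A_kA_{n-k}\bigr|\le\max\{1,|2\mu-1|\}.
\]

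The crux is to regard $X=\bar\zeta^{\,k}$ and $Y=\bar\zeta^{\,n-k}$ as functions on the probability space $(\{|\zeta|=1\},\nu)$, so that $XY=\bar\zeta^{\,n}$. Centring them as $\widetilde X=X-A_k$ and $\widetilde Y=Y-A_{n-k}$ gives $\int \widetilde X\widetilde Y\,d\nu=A_n-A_kA_{n-k}$, and since $|X|=|Y|=1$ the Cauchy--Schwarz inequality yields
\[
|A_n-A_kA_{n-k}|\le\Bigl(\int|\widetilde X|^2\,d\nu\Bigr)^{1/2}\Bigl(\int|\widetilde Y|^2\,d\nu\Bigr)^{1/2}=\sqrt{(1-|A_k|^2)(1-|A_{n-k}|^2)}.
\]
Writing $2\mu=1+(2\mu-1)$ and applying the triangle inequality, it then suffices to prove $\sqrt{(1-s^2)(1-t^2)}+\lambda st\le\max\{1,\lambda\}$ for $s=|A_k|,\,t=|A_{n-k}|\in[0,1]$ and $\lambda=|2\mu-1|$. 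Substituting $s=\cos\alpha$, $t=\cos\beta$ turns the square-root term into $\sin\alpha\sin\beta$, so that $\sqrt{(1-s^2)(1-t^2)}+st=\cos(\alpha-\beta)\le 1$; splitting into the cases $\lambda\le 1$ and $\lambda>1$ (factoring out $\lambda$ in the latter) then delivers the bound $\max\{1,\lambda\}$.

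I expect the main obstacle to be exactly this quadratic term: a direct pointwise estimate of the integrand in a symmetrised double-integral representation of $A_n-2\mu A_kA_{n-k}$ fails, since that integrand can exceed the claimed constant, and only the covariance reformulation exploits the correlation encoded in the single measure $\nu$ to recover the correct value. Sharpness is then read off from the representation: when $|2\mu-1|\ge 1$ the unit mass at $\zeta=1$ gives $p(z)=(1+z)/(1-z)$ with every $p_m=2$, so that $|p_n-\mu p_kp_{n-k}|=2|2\mu-1|$; when $|2\mu-1|<1$ the function $p(z)=(1+z^n)/(1-z^n)$ has $p_n=2$ but $p_k=p_{n-k}=0$ for $1\le k\le n-1$, giving left-hand side $2$.
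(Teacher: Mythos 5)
The paper never proves this lemma; it is imported verbatim as \cite[Theorem 1]{Efraimidis-2016}, so there is no internal argument to compare against, and what you have written is a correct, self-contained proof --- one that essentially reconstructs the Livingston-style $L^2(\nu)$ argument underlying the cited source. Every step checks out: the Herglotz representation gives $p_m=2A_m$ with $A_m=\int_{|\zeta|=1}\bar\zeta^{\,m}\,d\nu(\zeta)$, and since $p_n-\mu p_kp_{n-k}=2\left(A_n-2\mu A_kA_{n-k}\right)$ the reduction is exact; the identity $\int\widetilde X\widetilde Y\,d\nu=A_n-A_kA_{n-k}$ holds for complex integrands without conjugation, and because $|X|=|Y|=1$ $\nu$-a.e.\ one has $\int|\widetilde X|^2\,d\nu=1-|A_k|^2$, so Cauchy--Schwarz (applied as $|{\textstyle\int} fg\,d\nu|\le \|f\|_2\|g\|_2$, which needs no conjugate) yields $|A_n-A_kA_{n-k}|\le\sqrt{(1-|A_k|^2)(1-|A_{n-k}|^2)}$, which is precisely Livingston's lemma; the split $2\mu=1+(2\mu-1)$, the substitution $s=\cos\alpha$, $t=\cos\beta$ giving $\sqrt{(1-s^2)(1-t^2)}+st=\cos(\alpha-\beta)\le1$, and the factoring-out of $\lambda$ when $\lambda>1$ (using $\lambda^{-1}\sqrt{(1-s^2)(1-t^2)}\le\sqrt{(1-s^2)(1-t^2)}$) deliver the bound $\max\{1,\lambda\}$ in both cases. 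Your sharpness check is also right: $(1+z)/(1-z)$ has all $p_m=2$, so the left side equals $2|2\mu-1|$, matching the bound when $|2\mu-1|\ge1$, while $(1+z^n)/(1-z^n)$ has $p_n=2$ and $p_k=p_{n-k}=0$ for $1\le k\le n-1$, giving $2$ when $|2\mu-1|<1$. Your parenthetical remark about why the naive route fails is accurate and worth keeping: symmetrizing to the double integral of $x^n+y^n-2\mu\left(x^ky^{n-k}+x^{n-k}y^k\right)$ and estimating pointwise cannot work, since for example $x=1$, $y=-1$ with $n$ even, $k$ odd and $\mu=1$ makes the integrand $6$ while the claimed constant is $2$; only the covariance formulation, which exploits that both factors are moments of the \emph{same} measure $\nu$, recovers the sharp constant.
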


\begin{lemma}\label{lem-2.01}{\rm(\cite{Wang-Liu-2018})}
Let $f=h+\overline{g}\in\mathcal{M}(\alpha,\zeta,n)$. Then
the coefficients $a_k\ (k\in\N\setminus\{1\})$ of $h$ satisfy
\begin{equation}\label{e+2.01}
	\abs{a_k}\leq\frac{1}{k!}\prod_{j=2}^{k}(j-2\alpha)\ (k\in\N\setminus\{1\}).
\end{equation}
Moreover, the coefficients $b_k\ (k=n+1, n+2, \cdots; n\in\N)$ of $g$ satisfy
\begin{equation}\label{e+2.02}
\abs{b_{n+1}}\leq\frac{\abs{\zeta}}{n+1}\  \text{ and }\   \abs{b_{k+n}}\leq\frac{\abs{\zeta}}{(k+n)(k-1)!}\prod_{j=2}^{k}(j-2\alpha)\ (k\in\N\setminus\{1\};\, n\in\N).
\end{equation}
The bounds are sharp for the extremal function given by
\begin{equation}\label{e+2.03}
	f(z)=\int_0^z\frac{dt}{(1-\delta t)^{2-2\alpha}}+\overline{\int_0^z\frac{\zeta t^n}{(1-\delta t)^{2-2\alpha}}dt}\quad(\abs{\delta}=1;\, z\in\D).
\end{equation}
\end{lemma}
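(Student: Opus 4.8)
The plan is to handle the analytic part $h$ and the co-analytic part $g$ separately, deriving the $a_k$ estimate from the convexity-of-order-$\alpha$ hypothesis \eqref{eq-2.03} and then transferring it to the $b_k$ through the structural relation \eqref{eq-2.04}. First I would encode \eqref{eq-2.03} by writing $1+\frac{zh''(z)}{h'(z)}=\alpha+(1-\alpha)p(z)$ for some $p\in\mathcal{P}$ of the form \eqref{eq-2.00}; this is legitimate because the left-hand side has real part exceeding $\alpha$ and equals $1$ at the origin. Clearing denominators gives $zh''(z)=(1-\alpha)(p(z)-1)\,h'(z)$. Substituting the series $h(z)=z+\sum_{k\geq 2}a_k z^k$ (with $a_1=1$) and $p(z)-1=\sum_{m\geq 1}p_m z^m$ and comparing coefficients of $z^{k-1}$, I expect the recurrence
\begin{equation*}
k(k-1)a_k=(1-\alpha)\sum_{l=1}^{k-1} l\,a_l\,p_{k-l}\qquad (k\geq 2).
\end{equation*}
Writing $A_k:=\frac{1}{k!}\prod_{j=2}^{k}(j-2\alpha)$ for the claimed bound, the estimate $\abs{a_k}\leq A_k$ then follows by strong induction on $k$: invoking $\abs{p_{k-l}}\leq 2$ from Lemma \ref{lem-2.010} and the inductive hypothesis $\abs{a_l}\leq A_l$ for $l<k$ yields $k(k-1)\abs{a_k}\leq 2(1-\alpha)\sum_{l=1}^{k-1} l\,A_l$.

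The crux is to verify the identity $2(1-\alpha)\sum_{l=1}^{k-1} l\,A_l=k(k-1)A_k$, which is exactly what closes the induction. Rather than grind through the telescoping of the products, I would observe that the coefficients of the extremal convex function $k_\alpha(z)=\int_0^z(1-t)^{-(2-2\alpha)}\,dt$ are precisely the $A_l$, and that $k_\alpha$ corresponds to the choice $p(z)=(1+z)/(1-z)$, i.e. $p_m\equiv 2$; feeding this into the recurrence above forces $k(k-1)A_k=2(1-\alpha)\sum_{l=1}^{k-1} l\,A_l$, so the identity holds automatically. This is the main obstacle, in the sense that it is the one place where the special arithmetic of the order-$\alpha$ bound is genuinely used.

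For the co-analytic coefficients I would differentiate \eqref{eq-2.04}: from $g'(z)=\zeta z^n h'(z)$ and $h'(z)=\sum_{l\geq 1} l\,a_l z^{l-1}$ one reads off $g'(z)=\zeta\sum_{l\geq 1} l\,a_l z^{n+l-1}$, and matching against $g'(z)=\sum_{m\geq 1} m\,b_m z^{m-1}$ gives the clean relation $(n+l)\,b_{n+l}=\zeta\,l\,a_l$. Taking $l=1$ (with $a_1=1$) yields $\abs{b_{n+1}}=\abs{\zeta}/(n+1)$, while for $l=k\geq 2$ the bound $\abs{a_k}\leq A_k$ already established gives
\begin{equation*}
\abs{b_{n+k}}=\frac{\abs{\zeta}\,k}{n+k}\,\abs{a_k}\leq\frac{\abs{\zeta}}{(n+k)(k-1)!}\prod_{j=2}^{k}(j-2\alpha),
\end{equation*}
which is exactly \eqref{e+2.02}. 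Finally, sharpness is checked against \eqref{e+2.03}: taking $\delta=1$, the analytic part is $k_\alpha$, so the $a_k$-bound is attained, and since $g'(z)=\zeta z^n h'(z)$ holds for this $f$, the relation $(n+k)b_{n+k}=\zeta k a_k$ propagates equality to every $b_{n+k}$. Note that Lemma \ref{lem-2.011} is not needed for this statement; it is reserved for the subsequent Toeplitz-determinant estimates.
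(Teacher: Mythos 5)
Your proof is correct and follows essentially the same route as the cited source: the paper states Lemma \ref{lem-2.01} without reproducing a proof (it is quoted from \cite{Wang-Liu-2018}), but your recurrence $k(k-1)a_k=(1-\alpha)\sum_{l=1}^{k-1}l\,a_l\,p_{k-l}$ is exactly the identity \eqref{eq-5.07} that the paper itself derives in Section \ref{sec3}, and your induction closes correctly via $2(1-\alpha)\sum_{l=1}^{k-1}l\,A_l=k(k-1)A_k$, which your extremal-function argument (feeding $p_m\equiv 2$, i.e.\ $h=K_\alpha$ with $a_k=A_k>0$, into the recurrence) legitimately establishes. The transfer $(n+k)b_{n+k}=\zeta k a_k$ to the co-analytic coefficients and the sharpness check against \eqref{e+2.03} likewise match the standard argument, so there is nothing to correct.
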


\begin{lemma}\label{lem-2.02}{\rm(\cite{Wang-Liu-2018})}
Let $f\in\mathcal{M}(\alpha,\zeta,n)$ with $0\leq\alpha<1$ and $0\leq\zeta<\frac{1}{2n-1}\ (n\in\N)$.
Then
\begin{equation}\label{eq-2.05}
\Phi(r;\alpha,\zeta,n)\leq \abs{f(z)}\leq \Psi(r;\alpha,\zeta,n)\quad(r=|z|<1),
\end{equation}
where
\begin{equation*}
\Phi(r;\alpha,\zeta,n)
=\left\{\begin{array}{ll}
\log (1+r)-\displaystyle\frac{\zeta\,  r^{n+1} \, _2F_1(1,\, n+1;\, n+2;\, -r)}{n+1}  &(\alpha=1/2),\\ \\
\displaystyle\frac{(1+r)^{2 \alpha -1}-1}{2 \alpha -1}-\frac{\zeta\,  r^{n+1} \, _2F_1(n+1,\, 2-2 \alpha;\, n+2;\, -r)}{n+1}   &(\alpha\neq 1/2),
\end{array}\right.
\end{equation*}
and
\begin{equation*}
\Psi(r;\alpha,\zeta,n)
=\left\{\begin{array}{ll}-\log (1-r)+\displaystyle\frac{\zeta\,  r^{n+1} \, _2F_1(1,\, n+1;\, n+2;\, r)}{n+1}  & (\alpha=1/2), \\ \\
\displaystyle\frac{1-(1-r)^{2 \alpha -1}}{2 \alpha -1}+\frac{\zeta\,  r^{n+1} \, _2F_1(n+1,\, 2-2 \alpha;\, n+2;\, r)}{n+1}   & (\alpha\neq 1/2).
\end{array}\right.
\end{equation*}
All these bounds are sharp, the extremal function is $f_{\alpha,\zeta,n}=h_{\alpha}+\overline{g_{\alpha,\zeta,n}}$
or its rotations, where
\begin{equation}\label{eq-2.06}
f_{\alpha,\zeta,n}(z)
=\left\{\begin{array}{ll}
-\log(1-z)+\displaystyle\overline{\frac{\zeta\,  z^{n+1} \, _2F_1(1,\, n+1;\, n+2;\, z)}{n+1}}  &(\alpha=1/2),  \\ \\
\displaystyle\frac{1-(1-z)^{2\alpha-1}}{2\alpha-1}
+\overline{\frac{\zeta\,  z^{n+1} \, _2F_1(n+1,\, 2-2 \alpha;\, n+2;\, z)}{n+1}}   &(\alpha\neq 1/2).
\end{array}\right.
\end{equation}
\end{lemma}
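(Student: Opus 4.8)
The plan is to treat the two inequalities separately, reducing everything to the analytic part $h\in\mathcal{K}(\alpha)$ together with the dilatation relation $g'=\zeta z^n h'$. The starting point is the sharp distortion estimate for $h\in\mathcal{K}(\alpha)$, namely
\begin{equation*}
(1+r)^{2\alpha-2}\leq \abs{h'(z)}\leq (1-r)^{2\alpha-2}\qquad(\abs{z}=r<1),
\end{equation*}
which follows by setting $p(z)=1+zh''(z)/h'(z)$, noting $\operatorname{Re}p>\alpha$ so that $p\prec\bigl(1+(1-2\alpha)z\bigr)/(1-z)$, and integrating $(\log h')'=(p-1)/z$ along a radius; the case $h'(z)=(1-z)^{2\alpha-2}$ saturates both sides. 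I would also record the incomplete-integral identity
\begin{equation*}
\int_0^r t^n(1\mp t)^{2\alpha-2}\,dt=\frac{r^{n+1}}{n+1}\,{}_2F_1(n+1,\,2-2\alpha;\,n+2;\,\pm r),
\end{equation*}
obtained by expanding $(1\mp t)^{2\alpha-2}$ binomially, integrating term by term, and matching the coefficients $\binom{2\alpha-2}{k}(\pm1)^k/(n+k+1)$ against the hypergeometric coefficients via $(n+1)_k/[(n+1)(n+2)_k]=1/(n+k+1)$.

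For the upper bound I would use the triangle inequality $\abs{f(z)}\leq\abs{h(z)}+\abs{g(z)}$ and integrate radially. Since $\abs{h(z)}\leq\int_0^r(1-t)^{2\alpha-2}\,dt$ and $\abs{g(z)}\leq\int_0^r\zeta t^n(1-t)^{2\alpha-2}\,dt$, the identity above converts the first integral into $\bigl(1-(1-r)^{2\alpha-1}\bigr)/(2\alpha-1)$ and the second into $\zeta r^{n+1}\,{}_2F_1(n+1,2-2\alpha;n+2;r)/(n+1)$; their sum is exactly $\Psi(r;\alpha,\zeta,n)$, with the logarithmic degeneration arising when $\alpha=1/2$.

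The lower bound is the main obstacle, since here univalence is essential and one cannot simply estimate $\abs{h}-\abs{g}$ along a radius. The idea is to set $m(r)=\min_{\abs{z}=r}\abs{f(z)}$, attained at some $z_0$, and let $w_0=f(z_0)$ be a nearest boundary point of the Jordan domain $\Omega_r=f(\{\abs{z}<r\})$, so that $m(r)=\operatorname{dist}(0,\partial\Omega_r)$ and the open segment $[0,w_0]\subset\Omega_r$. Pulling this segment back through the univalent $f$ produces a curve $\gamma$ from $0$ to $z_0$ inside $\{\abs{z}\leq r\}$, along which, using $df=h'\,dz+\overline{g'\,dz}$, the lower distortion bound, and $\abs{\omega_f}=\zeta\abs{z}^n$,
\begin{equation*}
m(r)=\int_\gamma\abs{df}\geq\int_\gamma\bigl(\abs{h'}-\abs{g'}\bigr)\abs{dz}\geq\int_\gamma\bigl(1-\zeta\abs{z}^n\bigr)(1+\abs{z})^{2\alpha-2}\abs{dz}.
\end{equation*}
Writing $z=\rho e^{i\phi}$ gives $\abs{dz}\geq\abs{d\rho}$, and since the integrand $\psi(\rho)=(1-\zeta\rho^n)(1+\rho)^{2\alpha-2}$ is nonnegative on $[0,r]$ — guaranteed by $\zeta<1/(2n-1)\leq1$ and $\rho<1$ — one obtains $\int_\gamma\psi(\rho)\abs{dz}\geq\int_0^r\psi(t)\,dt$ by comparing the total variation of an antiderivative of $\psi$ along $\gamma$ with its net change between $\rho=0$ and $\rho=r$. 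This last integral equals $\Phi(r;\alpha,\zeta,n)$ by the identity above, now with the $+t$ sign producing the $-r$ argument in the hypergeometric term. Finally, sharpness is verified by direct evaluation of the extremal $f_{\alpha,\zeta,n}$ in \eqref{eq-2.06}: the upper bound is attained at $z=r$ and the lower bound at $z=-r$, where $h$ and $g$ are real and each inequality above collapses to equality. The two delicate points are justifying $[0,w_0]\subset\Omega_r$ (the nearest-boundary-point argument, which uses that $f$ maps $\{\abs{z}\le r\}$ homeomorphically onto a closed Jordan domain) and the curve-to-radius reduction, both of which rely on the sense-preserving hypothesis keeping $\psi>0$ throughout.
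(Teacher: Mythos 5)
The paper itself states this lemma without proof, quoting it from \cite{Wang-Liu-2018}, and your argument reproduces the standard proof given in that source: the sharp distortion bounds $(1+r)^{2\alpha-2}\leq|h'(z)|\leq(1-r)^{2\alpha-2}$ for $h\in\mathcal{K}(\alpha)$, radial integration together with the hypergeometric identity for the upper bound, and the pullback through the univalent $f$ of the segment to the nearest boundary point of $f(\mathbb{D}_r)$ for the lower bound --- all correct, including the two delicate points you flag. The one imprecision is in your sharpness check: for even $n$ one has $g_{\alpha,\zeta,n}(-r)=-\zeta\int_0^r t^n(1+t)^{2\alpha-2}\,dt$, so $|f_{\alpha,\zeta,n}(-r)|=\frac{(1+r)^{2\alpha-1}-1}{2\alpha-1}+\zeta\int_0^r t^n(1+t)^{2\alpha-2}\,dt>\Phi(r;\alpha,\zeta,n)$, and equality in the lower bound is attained not at $z=-r$ for $f_{\alpha,\zeta,n}$ itself but only for a suitable rotation, exactly as the phrase ``or its rotations'' in the statement requires.
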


 The following two results are due to Ma and Minda \cite{Ma-Minda-1992}.
%\begin{lemma} \label{lem-2.03}
%	Let $f \in \mathcal{S}^{*}(\phi)$. Then $zf'(z)/f(z) \prec zH'(z)/H(z)$ and $f(z)/z \prec H(z)/z$, where $H(z)$ is given by \eqref{eq-1.02}.
%\end{lemma}
\begin{lemma} \label{lem-2.04}
	Let $f \in \mathcal{C}(\phi)$. Then $zf''(z)/f'(z) \prec zK''(z)/K'(z)$ and $f'(z) \prec K'(z)$, where $K$ is given by \eqref{eq-1.02}.
\end{lemma}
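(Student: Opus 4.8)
The plan is to dispatch the two subordinations separately: the first is essentially a restatement of the definitions, while the second is reduced, via an integral representation, to a single subordination-preserving property of an integral operator.

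For the first assertion, note that membership $f\in\mathcal C(\phi)$ means by definition $1+zf''(z)/f'(z)\prec\phi(z)$, whereas the extremal function satisfies $1+zK''(z)/K'(z)=\phi(z)$ identically by \eqref{eq-1.02}. Since subordination is unchanged by subtracting the common initial value $\phi(0)=1$, and since $\phi-1$ (hence $zK''/K'=\phi-1$) is univalent, one obtains at once
\[
\frac{zf''(z)}{f'(z)}\prec\phi(z)-1=\frac{zK''(z)}{K'(z)}.
\]
Explicitly, writing $1+zf''/f'=\phi\circ\omega$ for a Schwarz function $\omega$ yields $zf''/f'=(\phi-1)\circ\omega\prec\phi-1$.

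For $f'\prec K'$ I would start from integral representations. By \eqref{eq-1.02},
\[
K'(z)=\exp\!\left(\int_0^z\frac{\phi(t)-1}{t}\,dt\right)=:e^{P(z)},\qquad P(z):=\int_0^z\frac{\phi(t)-1}{t}\,dt .
\]
Because $f\in\mathcal C(\phi)$, there is a Schwarz function $\omega$ with $1+zf''(z)/f'(z)=\phi(\omega(z))$; dividing by $z$ and integrating (the integrand is analytic since $\phi(\omega(z))-1$ vanishes at the origin) gives
\[
f'(z)=\exp\!\left(\int_0^z\frac{\phi(\omega(t))-1}{t}\,dt\right)=:e^{P_f(z)},\qquad P_f(z):=\int_0^z\frac{\phi(\omega(t))-1}{t}\,dt .
\]
The point of this rewriting is that $f'\prec K'$ follows immediately once $P_f\prec P$ is known: if $P_f=P\circ\sigma$ for a Schwarz function $\sigma$, then $f'=e^{P_f}=e^{P\circ\sigma}=K'\circ\sigma$, which is exactly $f'\prec K'$.

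It remains to prove $P_f\prec P$, and this is where the Ma--Minda hypotheses enter and the main difficulty lies. The assumption that $\phi$ is starlike with respect to $\phi(0)=1$ reads $\mathrm{Re}\,\big(z\phi'(z)/(\phi(z)-1)\big)>0$; a direct computation gives $1+zP''(z)/P'(z)=z\phi'(z)/(\phi(z)-1)$, so $P$ is convex (univalent), equivalently $\phi-1=zP'$ is starlike. Since $\omega$ is a Schwarz function and $\phi$ is univalent, $\phi(\omega(z))-1\prec\phi(z)-1$, and the desired $P_f\prec P$ is precisely the statement that the weighted integral operator $a\mapsto\int_0^z a(t)\,t^{-1}\,dt$ preserves this subordination. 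This preservation holds exactly because the dominant $\phi-1$ is starlike (equivalently $P$ is convex); it can be quoted from the standard theory of first-order differential subordinations, or proved directly by showing $P_f(\mathbb{D})\subseteq P(\mathbb{D})$ and invoking the convexity of $P(\mathbb{D})$ together with univalence of $P$. I expect verifying this integral subordination to be the crux of the argument; granting it, the exponential-composition step above closes the proof.
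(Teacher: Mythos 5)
Your proof is correct and takes essentially the same route as the source: the paper states this lemma without proof, citing Ma--Minda \cite{Ma-Minda-1992}, and their argument is exactly yours---the first subordination is immediate since $1+zf''/f'=\phi\circ\omega$ gives $zf''/f'=(\phi-1)\circ\omega\prec \phi-1=zK''/K'$, and $f'\prec K'$ follows by writing $\log f'=P_f$ and $\log K'=P$ as your integrals and deducing $P_f\prec P$ from $zP_f'\prec zP'$ with $P$ convex. The single step you leave quoted is precisely Suffridge's subordination-under-integration theorem (T.~J. Suffridge, \emph{Duke Math. J.} \textbf{37} (1970), 775--777), which is the very lemma Ma--Minda invoke at the same point, so nothing essential is missing.
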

\begin{lemma} \label{lem-2.05}
	Assume that $f \in \mathcal{C}(\phi)$ and $|z|=r<1$. Then
	\begin{equation} \label{eq-2.07}
	K'(-r) \leq |f'(z)| \leq K'(r),
	\end{equation}
where $K$ is given by \eqref{eq-1.02}.	Equality holds for some $z \neq 0$ if and only if $f$ is a rotation of $K$.
\end{lemma}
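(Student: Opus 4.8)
The plan is to convert the subordination defining $\mathcal{C}(\phi)$ into an integral representation for $\log f'$, to bound its real part by a sharp extremal property of the Ma--Minda function $\phi$ on circles, and then to recognise the resulting bounds as $\log K'(\pm r)$. Since $f\in\mathcal{C}(\phi)$ we have $1+zf''(z)/f'(z)\prec\phi(z)$, so there is a Schwarz function $\omega$ (analytic on $\mathbb{D}$, $\omega(0)=0$, $|\omega(z)|<1$) with $1+zf''(z)/f'(z)=\phi(\omega(z))$; this is compatible with the subordinations recorded in Lemma \ref{lem-2.04}. Writing the identity as $(\log f')'(z)=(\phi(\omega(z))-1)/z$ and integrating along the segment $t=sz$, $s\in[0,1]$, I would obtain
\begin{equation*}
\log f'(z)=\int_{0}^{1}\frac{\phi(\omega(sz))-1}{s}\,ds,\qquad \log|f'(z)|=\int_{0}^{1}\frac{\operatorname{Re}\phi(\omega(sz))-1}{s}\,ds .
\end{equation*}

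Next I would record the extremal property of $\phi$ that drives the estimate. Because $\phi(\mathbb{D})$ has positive real part, is symmetric about the real axis (so $\phi$ has real Taylor coefficients and $\phi(\pm\rho)\in\mathbb{R}$) and is starlike with respect to $\phi(0)=1$, the harmonic function $\operatorname{Re}\phi$ attains over each closed disk $\{|w|\le\rho\}$ its extreme values at the real boundary points:
\begin{equation*}
\phi(-\rho)=\min_{|w|\le\rho}\operatorname{Re}\phi(w)\le \operatorname{Re}\phi(w)\le \max_{|w|\le\rho}\operatorname{Re}\phi(w)=\phi(\rho).
\end{equation*}
By the Schwarz lemma $|\omega(sz)|\le s|z|=sr$, so inserting this inequality with $\rho=sr$ into the integrand gives
\begin{equation*}
\int_{0}^{1}\frac{\phi(-sr)-1}{s}\,ds\le \log|f'(z)|\le \int_{0}^{1}\frac{\phi(sr)-1}{s}\,ds .
\end{equation*}

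To finish I would identify the outer integrals with $\log K'(\pm r)$. Differentiating \eqref{eq-1.02} yields $(\log K')'(\zeta)=(\phi(\zeta)-1)/\zeta$, hence $\log K'(r)=\int_{0}^{r}(\phi(t)-1)/t\,dt$; the substitutions $t=sr$ and $t=-sr$ turn the two bounding integrals into $\log K'(r)$ and $\log K'(-r)$ respectively, so exponentiation gives $K'(-r)\le|f'(z)|\le K'(r)$, which is \eqref{eq-2.07}. For the equality clause, if $|f'(z_{0})|=K'(r)$ at some $z_{0}\neq0$ with $r=|z_{0}|$, then equality must hold for almost every $s$ in the real-part estimate, forcing $\operatorname{Re}\phi(\omega(sz_{0}))=\phi(sr)$; since $\phi(sr)$ is the maximum of $\operatorname{Re}\phi$ over $\{|w|\le sr\}$, attained only at $w=sr$, this yields $\omega(sz_{0})=sr$ for all such $s$, so $\omega(z)=e^{i\gamma}z$ is a rotation and back-substitution shows $f$ is a rotation of $K$ (the lower-bound case is symmetric, and rotations of $K$ obviously attain the bounds).

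The step I expect to be the main obstacle is the displayed extremal property of $\operatorname{Re}\phi$ on circles, together with the uniqueness of the extremal point needed for the equality statement; this is precisely where the full Ma--Minda hypotheses enter. I would prove it by observing that the real coefficients make $\theta\mapsto\operatorname{Re}\phi(\rho e^{i\theta})$ even, so that $\theta=0,\pi$ are critical points, and that the starlikeness relation $\operatorname{Re}\,[w\phi'(w)/(\phi(w)-1)]>0$ controls the sign of the angular derivative $-\operatorname{Im}[w\phi'(w)]$, rendering this map strictly monotone on $[0,\pi]$; the extremes then occur only at the endpoints $w=\pm\rho$. Care is also needed to ensure $K'(\pm r)>0$ (which follows since $\phi$ is real and positive on $(-1,1)$), so that the logarithms and the passage from the disk to its boundary circle via the minimum principle are legitimate.
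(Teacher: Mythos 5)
Your overall architecture --- Schwarz function, the representation $\log f'(z)=\int_0^1 \bigl(\phi(\omega(sz))-1\bigr)s^{-1}\,ds$, identification of the bounding integrals with $\log K'(\pm r)$, and Schwarz-lemma rigidity for the equality case --- is exactly the classical Ma--Minda route; note the paper itself gives no proof, simply citing \cite{Ma-Minda-1992}. But the step you yourself single out as the main obstacle is a genuine gap, not a technicality: the extremal property $\phi(-\rho)\le\operatorname{Re}\phi(w)\le\phi(\rho)$ for $|w|\le\rho$ is \emph{not} a consequence of the Ma--Minda axioms (univalence, $\operatorname{Re}\phi>0$, symmetry in $\mathbb{R}$, starlikeness with respect to $1$, $\phi'(0)>0$). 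Concretely, let $\phi$ map $\mathbb{D}$ conformally onto $\Omega=\{w:\operatorname{Re} w>0\}\setminus[2,\infty)$ with $\phi(0)=1$, $\phi'(0)>0$. Then $\Omega$ is symmetric about $\mathbb{R}$, lies in the right half-plane, and is starlike with respect to $1$ (a segment from $1$ to a non-real point of $\Omega$ has strictly monotone imaginary part, hence avoids the slit), so $\phi$ is a legitimate Ma--Minda function; yet $\phi$ is real and increasing on $(-1,1)$ with $\phi(\rho)<2$, while $\sup_{\mathbb{D}}\operatorname{Re}\phi=\sup_{\Omega}\operatorname{Re} w=\infty$, so $\max_{|w|=\rho}\operatorname{Re}\phi(w)>\phi(\rho)$ for $\rho$ near $1$, and your key displayed inequality fails (a symmetric pair of lobes approaching the imaginary axis off the real line breaks the minimum claim similarly). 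Your sketched derivation is precisely where the argument collapses: $\frac{d}{d\theta}\operatorname{Re}\phi(\rho e^{i\theta})=-\operatorname{Im}[w\phi'(w)]$, and starlikeness only gives $w\phi'(w)=(\phi(w)-1)q(w)$ with $\operatorname{Re} q>0$; writing $\alpha=\arg(\phi(w)-1)$, which sweeps $(0,\pi)$ along the upper semicircle, one finds $-\operatorname{Im}[w\phi'(w)]=-|\phi(w)-1|\bigl(\sin\alpha\,\operatorname{Re} q+\cos\alpha\,\operatorname{Im} q\bigr)$, whose sign is uncontrolled because $\operatorname{Im} q$ is unconstrained, so monotonicity of $\theta\mapsto\operatorname{Re}\phi(\rho e^{i\theta})$ on $[0,\pi]$ does not follow.

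What saves the lemma in practice is that Ma--Minda prove their distortion theorem under an explicit \emph{additional} hypothesis, $\min_{|z|=r}|\phi(z)|=\phi(-r)$ and $\max_{|z|=r}|\phi(z)|=\phi(r)$, which the quoted lemma (and a fair amount of the subsequent literature) states with this assumption suppressed; it holds for all the standard choices of $\phi$, but, as the example shows, not for every function satisfying the class definition. Under that hypothesis your upper estimate is immediate, since $\operatorname{Re}\phi(w)\le|\phi(w)|\le\phi(sr)$ for $|w|=sr$, and your integral bookkeeping and the identification $\log K'(\pm r)=\int_0^1\bigl(\phi(\pm sr)-1\bigr)s^{-1}\,ds$ are correct; note, however, that the lower bound $\operatorname{Re}\phi(w)\ge\phi(-\rho)$ still does not follow from $|\phi(w)|\ge\phi(-\rho)$ alone, and your equality analysis additionally presumes that $\operatorname{Re}\phi$ attains its maximum on each circle only at the real point, a uniqueness statement you assert but never prove. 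In short: right skeleton, but the pivotal circle-extremality of $\operatorname{Re}\phi$ must be imposed as a hypothesis (or verified for the specific $\phi$ at hand), not derived from the definition of a Ma--Minda function.
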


\begin{lemma} {\rm (\cite{Bhowmik-Das-2018})}\label{lem-2.07}
	Let $f(z)=\sum_{n=0}^{\infty} a_{n}z^{n}$ and $g(z)=\sum_{n=0}^{\infty} b_{n}z^{n}$ be two analytic functions in $\mathbb{D}$ and $g \prec f$. Then
	\begin{equation} \label{eq-2.08}
	\sum_{n=0}^{\infty} |b_{n}| r^{n} \leq \sum_{n=0}^{\infty} |a_{n}| r^{n}
	\end{equation}
	for $|z|=r \leq 1/3.$
\end{lemma}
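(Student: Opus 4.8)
The plan is to exploit the subordination relation to write $g$ as a composition, and then reduce everything to the classical Bohr inequality recorded in \eqref{eq-1.03}. Since $g \prec f$, there is a Schwarz function $\omega$, i.e.\ $\omega$ is analytic in $\mathbb{D}$ with $\omega(0)=0$ and $|\omega(z)|<1$, such that $g(z)=f(\omega(z))$. Expanding $f(\omega(z))=a_0+\sum_{m=1}^{\infty} a_m\,\omega(z)^m$ and comparing coefficients, I would record $b_0=a_0$ and, for $n\ge 1$, $b_n=\sum_{m=1}^{n} a_m A_{n,m}$, where $A_{n,m}$ denotes the $n$-th Taylor coefficient of $\omega^m$ (note $A_{n,m}=0$ for $n<m$, since $\omega^m$ vanishes to order $m$). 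The triangle inequality together with the interchange of summation for nonnegative terms then yields, for $r<1$,
\[
\sum_{n=0}^{\infty} |b_n|\, r^n \;\le\; |a_0| + \sum_{m=1}^{\infty} |a_m| \sum_{n\ge m} |A_{n,m}|\, r^n \;=\; |a_0| + \sum_{m=1}^{\infty} |a_m|\, M_{\omega^m}(r),
\]
where $M_{\omega^m}(r)$ is the majorant series of $\omega^m$. Thus the lemma follows once I establish the key estimate $M_{\omega^m}(r)\le r^m$ for $r\le 1/3$ and every $m\ge 1$, because then the right-hand side is bounded by $|a_0|+\sum_{m\ge 1}|a_m|\,r^m=\sum_{n\ge 0}|a_n|\,r^n$.

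The crux is therefore the inequality $M_{\omega^m}(r)\le r^m$, and here I would invoke Bohr's classical inequality. By the Schwarz lemma $|\omega(z)|\le |z|$, so the function $\psi(z):=\omega(z)/z$ extends analytically across the origin and satisfies $|\psi(z)|\le 1$ in $\mathbb{D}$; hence $\psi\in\mathcal{B}$, and so is each power $\psi^m$, since $|\psi^m(z)|=|\psi(z)|^m\le 1$. Writing $\omega^m(z)=z^m\psi^m(z)$ gives $M_{\omega^m}(r)=r^m\,M_{\psi^m}(r)$, and applying \eqref{eq-1.03} to the bounded function $\psi^m\in\mathcal{B}$ yields $M_{\psi^m}(r)\le 1$ precisely for $r\le 1/3$. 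This produces $M_{\omega^m}(r)\le r^m$ on $|z|=r\le 1/3$, which is exactly the estimate needed to close the chain of inequalities above.

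The main obstacle is the passage $M_{\omega^m}(r)\le r^m$: it is what forces the radius $1/3$, and it is not available from crude coefficient bounds, since the naive estimate $|A_{n,m}|\le 1$ only gives $r^m/(1-r)$, which exceeds the required $r^m$. The decisive observation is to factor out the zero of order $m$ at the origin, reducing $\omega^m$ to a genuinely bounded function $\psi^m$ to which Bohr's theorem applies with its sharp constant. The remaining steps---the coefficient expansion of the composition, the interchange of summation, and the final geometric comparison---are routine, and the degenerate case $\omega\equiv 0$ (where $g\equiv a_0$) makes the inequality immediate.
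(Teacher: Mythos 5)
Your proof is correct, and it takes essentially the same route as the source the paper cites for this lemma (the paper itself states it without proof): Bhowmik and Das \cite{Bhowmik-Das-2018} likewise write $g=f\circ\omega$ with a Schwarz function $\omega$ and obtain the key bound $M_{\omega^m}(r)\le r^m$ for $r\le 1/3$ by applying the classical Bohr inequality \eqref{eq-1.03} to the bounded function $\omega(z)^m/z^m$. Your factoring out of the zero of order $m$ at the origin is exactly the decisive step there as well, so nothing further is needed.
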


\begin{rem}{\rm
Lemma \ref{lem-2.07} continues to hold for quasi-subordination (cf. \cite{Alkhaleefah-Kayumov-Ponnusamy-2019}). Moreover, the
bound 1/3 is optimal as shown by  \cite[Lemma 1]{Ponnusamy-Vijayakumar-Wirths-2022}.}
\end{rem}

\section{\large\bf {\sc Toeplitz determinants for the class $\mathcal{M}(\alpha,\zeta,n)$}}\label{sec3}%
In this section, we will give several sharp estimates for Toeplitz determinants $|T_{q}(n)[\cdot]|$ of
functions in the class $\mathcal{M}(\alpha,\zeta,n)$.
\begin{thm}\label{thm-5.01}
Let  $f\in \mathcal{M}(\alpha,\zeta,n)$. Then
\begin{equation}\label{eq-5.01}
|T_{2}(n)[h]|\leq\left(\frac{1}{n!}\prod_{j=2}^{n}(j-2\alpha)\right)^2+
\left(\frac{1}{(n+1)!}\prod_{j=2}^{n+1}(j-2\alpha)\right)^2\quad (n\in \mathbb{N}\backslash \{1\}),
\end{equation}
and
\begin{equation}\label{eq-5.02}
    |T_{2}(n)[g]|\leq \frac{1}{[(2n-1)(n+1)]^2}.
\end{equation}
 The inequalities in \eqref{eq-5.01} and \eqref{eq-5.02} are sharp for the extremal function given by \eqref{e+2.03}.
\end{thm}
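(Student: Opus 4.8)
The plan is to reduce both determinants to explicit expressions in the Taylor coefficients and then invoke the sharp coefficient bounds of Lemma~\ref{lem-2.01}. Expanding the $2\times 2$ symmetric Toeplitz determinant directly from the definition gives
\begin{equation*}
T_{2}(n)[h]=\begin{vmatrix} a_n & a_{n+1}\\ a_{n+1} & a_n\end{vmatrix}=a_n^2-a_{n+1}^2,
\end{equation*}
and likewise $T_{2}(n)[g]=b_n^2-b_{n+1}^2$. For the analytic part I would apply the triangle inequality in the form $\abs{a_n^2-a_{n+1}^2}\le\abs{a_n}^2+\abs{a_{n+1}}^2$ and then substitute the two bounds $\abs{a_n}\le\frac{1}{n!}\prod_{j=2}^{n}(j-2\alpha)$ and $\abs{a_{n+1}}\le\frac{1}{(n+1)!}\prod_{j=2}^{n+1}(j-2\alpha)$ from Lemma~\ref{lem-2.01}; this yields \eqref{eq-5.01} at once.

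For the co-analytic part I would read off the coefficients of $g$ from the structural relation \eqref{eq-2.04}. Since $g'(z)=\zeta z^{n}h'(z)$ and $h'(0)=1$, the Taylor series of $g$ begins at $z^{n+1}$, so that $b_n=0$ while $b_{n+1}=\zeta/(n+1)$ exactly (the normalization $a_1=1$ forces this value independently of the remaining coefficients). Hence $T_{2}(n)[g]=-b_{n+1}^2$ and $\abs{T_{2}(n)[g]}=\abs{\zeta}^2/(n+1)^2$, which is in fact an identity; the bound \eqref{eq-5.02} therefore holds with equality for every $f\in\mathcal{M}(\alpha,\zeta,n)$, and its sharpness is automatic.

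The delicate point is the sharpness of \eqref{eq-5.01}, where I must exhibit a single function attaining equality in the triangle inequality \emph{and} in both coefficient estimates simultaneously. I would take the extremal mapping \eqref{e+2.03}, for which $h'(z)=(1-\delta z)^{-(2-2\alpha)}$ with $\abs{\delta}=1$, and compute $a_m=\frac{\delta^{m-1}}{m!}\prod_{j=2}^{m}(j-2\alpha)$, so that each modulus $\abs{a_m}$ already meets the Lemma~\ref{lem-2.01} bound. It then remains to choose the rotation so that $a_n^2$ and $-a_{n+1}^2$ have the same argument: writing $a_n^2-a_{n+1}^2=\delta^{2(n-1)}\bigl(\abs{a_n}^2-\delta^{2}\abs{a_{n+1}}^2\bigr)$ shows that the choice $\delta=\pm i$, for which $\delta^{2}=-1$, converts the difference into the sum $\abs{a_n}^2+\abs{a_{n+1}}^2$ up to a unimodular factor. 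This is the step that carries the whole sharpness claim, and it is exactly where the freedom in the unimodular constant $\delta$ of \eqref{e+2.03} is used.
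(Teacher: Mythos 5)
Your proof is correct and takes essentially the same route as the paper's: the expansion $T_{2}(n)[h]=a_n^2-a_{n+1}^2$, the triangle inequality $|a_n^2-a_{n+1}^2|\leq |a_n|^2+|a_{n+1}|^2$ combined with the coefficient bounds of Lemma~\ref{lem-2.01}, and sharpness via the extremal function \eqref{e+2.03}. You are in fact more precise than the paper on two points it leaves implicit: that $b_n=0$ and $b_{n+1}=\zeta/(n+1)$ force \eqref{eq-5.02} to hold with equality for \emph{every} member of the class, and that equality in \eqref{eq-5.01} is attained not for an arbitrary unimodular $\delta$ in \eqref{e+2.03} but only for the rotations with $\delta^{2}=-1$, i.e.\ $\delta=\pm i$, which aligns the arguments of $a_n^2$ and $-a_{n+1}^2$.
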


\begin{proof}
   Suppose that $f\in \mathcal{M}(\alpha,\zeta,n)$. By Lemma \ref{lem-2.01}, we see that
    \begin{equation}\label{eq-5.02+1}
        |T_{2}(n)[h]|=|a_n^2-a_{n+1}^{2}|\leq |a_n^2| +|a_{n+1}^2|
    \end{equation}
    yields \eqref{eq-5.01}. Equality in \eqref{eq-5.02+1} holds for the function $h$ given by
    \begin{equation}\label{eq-5.02+2}
    \begin{aligned}
        h(z)& = \int_0^z\frac{dt}{(1-\delta t)^{2-2\alpha}}\\
            & = z+\frac{1}{2} (2-2 \alpha) \delta^2 z^2+\frac{1}{6} (2-2\alpha) (3-2 \alpha) \delta^3z^3
            +\frac{1}{24} (2-2 \alpha)(3-2 \alpha)(4-2 \alpha)\delta^4 z^4 \\
           &\qquad\qquad+\frac{1}{120} (2-2 \alpha)(3-2 \alpha)(4-2 \alpha)(5-2 \alpha)\delta^5 z^5+ \cdots \quad(\abs{\delta}=1;\, z\in\D).
    \end{aligned}
    \end{equation}
By virtue of \eqref{e+2.02}, we get the assertion
\eqref{eq-5.02}. The proof of Theorem \ref{thm-5.01} is thus completed.
\end{proof}

\begin{cor}\label{cor-5.01}
    Let $f\in \mathcal{M}(\alpha,\zeta,2)$. Then
    \begin{equation}\label{eq-5.03}
        |T_{2}(2)[h]|\leq \frac{2}{9} (1-\alpha)^2 \left(2 \alpha^2-6 \alpha+9\right),
    \end{equation}
        and
\begin{equation}\label{eq-5.04}
    |T_{2}(2)[g]|\leq \frac{1}{81}.
\end{equation}
The inequalities in \eqref{eq-5.03} and \eqref{eq-5.04} are sharp for the extremal function given by \eqref{e+2.03} with $n=2$.
\end{cor}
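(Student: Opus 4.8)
The plan is to obtain Corollary \ref{cor-5.01} as the special case $n=2$ of Theorem \ref{thm-5.01}, so essentially no new analytic input is needed: everything reduces to evaluating the two product-expressions in \eqref{eq-5.01} and \eqref{eq-5.02} at $n=2$. First I would substitute $n=2$ into the right-hand side of \eqref{eq-5.01}. The first term is $\bigl(\tfrac{1}{2!}\prod_{j=2}^{2}(j-2\alpha)\bigr)^2 = \bigl(\tfrac{1}{2}(2-2\alpha)\bigr)^2 = (1-\alpha)^2$, since the product has the single factor $j=2$. The second term is $\bigl(\tfrac{1}{3!}\prod_{j=2}^{3}(j-2\alpha)\bigr)^2 = \bigl(\tfrac{1}{6}(2-2\alpha)(3-2\alpha)\bigr)^2$, whose base simplifies to $\tfrac{1}{3}(1-\alpha)(3-2\alpha)$, giving $\tfrac{1}{9}(1-\alpha)^2(3-2\alpha)^2$.

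Next I would add the two contributions and factor out $(1-\alpha)^2$. This yields
\begin{equation*}
|T_{2}(2)[h]| \le (1-\alpha)^2 + \tfrac{1}{9}(1-\alpha)^2(3-2\alpha)^2
= (1-\alpha)^2\Bigl(1 + \tfrac{1}{9}(3-2\alpha)^2\Bigr).
\end{equation*}
It then remains to check the purely algebraic identity $1 + \tfrac{1}{9}(3-2\alpha)^2 = \tfrac{2}{9}\bigl(2\alpha^2-6\alpha+9\bigr)$: expanding $(3-2\alpha)^2 = 4\alpha^2 - 12\alpha + 9$ gives $9 + 4\alpha^2 - 12\alpha + 9 = 4\alpha^2 - 12\alpha + 18 = 2(2\alpha^2 - 6\alpha + 9)$, and dividing by $9$ produces exactly the claimed factor. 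This establishes \eqref{eq-5.03}. For \eqref{eq-5.04} I would simply set $n=2$ in \eqref{eq-5.02}, which replaces $(n+1)^2$ by $9$ and immediately gives $|T_{2}(2)[g]| \le |\zeta|^2/9$.

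Finally, sharpness is inherited directly from Theorem \ref{thm-5.01}: the extremal function in \eqref{e+2.03} (with $n=2$, $|\delta|=1$) saturates both inequalities, exactly as in the proof of the theorem. There is really no substantive obstacle here—the only thing to watch is the bookkeeping in the two telescoping products, namely correctly reading off that $\prod_{j=2}^{2}$ has one factor and $\prod_{j=2}^{3}$ has two, and then carrying out the elementary factoring carefully so that the stated quadratic $2\alpha^2-6\alpha+9$ emerges with the correct constant $\tfrac{2}{9}$.
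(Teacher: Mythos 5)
Your proposal is correct and coincides with the paper's (implicit) argument: the paper states Corollary \ref{cor-5.01} without a separate proof precisely because it is the case $n=2$ of Theorem \ref{thm-5.01}, and your substitution, the factoring identity $1+\tfrac{1}{9}(3-2\alpha)^2=\tfrac{2}{9}(2\alpha^2-6\alpha+9)$, and the inheritance of sharpness from the extremal function \eqref{e+2.03} are exactly what that specialization requires.
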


\begin{thm}\label{thm-5.02}
    Let $f\in \mathcal{M}(\alpha,\zeta,1)$. Then
    \begin{equation}\label{eq-5.05}
    \left|T_{3}(1)[h]\right|\leq \begin{cases}
        {\frac{1}{9} \big(8 \alpha^4-34 \alpha^3+71 \alpha^2-72 \alpha+36\big) \quad \ (-\frac{1}{2}\leq\alpha\leq \frac{1}{2}}),  \\ \\[-12pt]
        {\frac{1}{9} \left(-2 \alpha^3+25 \alpha^2-44 \alpha+30\right) \quad\quad \quad\quad\ (\frac{1}{2}\leq \alpha< 1}),
	\end{cases}
    \end{equation}
    and
    \begin{equation}\label{eq-5.06}
        |T_{3}(1)[g]|\leq \frac{1}{3}(1-\alpha).
    \end{equation}
The inequality in \eqref{eq-5.05} is sharp for the function $h$ given by \eqref{eq-5.02+2}, and the inequality in \eqref{eq-5.06} is sharp for
the function $g$ defined by
\begin{equation}\label{eq-5.10+2}
g(z)={\int_0^z\frac{\zeta t}{(1-\delta t)^{2-2\alpha}}dt} \quad(\abs{\delta}=1;\, \abs{\zeta}\leq 1;\, z\in\D).
\end{equation}
\end{thm}

\begin{proof}
	For $f\in \mathcal{M}(\alpha,\zeta,1)$,   we see that
$$
p(z)=\frac{1}{1-\alpha}\left(1+\frac{z h^{\prime \prime}(z)}{h^{\prime}(z)}-\alpha\right)\in \mathcal{P} \quad\left(-\frac{1}{2}\leq \alpha<1;\, z \in \mathbb{D}\right).
$$
It follows that
\begin{equation}\label{eq-5.07}
n(n-1) a_{n}=(1-\alpha) \sum_{k=1}^{n-1} k a_{k} p_{n-k} \quad(n \geq 2).
\end{equation}
From  \eqref{eq-5.07}, we obtain
\begin{equation}\label{eq-5.08}
	\begin{cases}
a_{2}=\frac{1}{2}(1-\alpha) p_{1}, \\ \\[-12pt]
a_{3}=\frac{1}{6}(1-\alpha)\left[(1-\alpha) p_{1}^{2}+p_{2}\right], \\  \\[-12pt]
a_{4}=\frac{1}{24}(1-\alpha)\left[(1-\alpha)^{2} p_{1}^{3}+3(1-\alpha) p_{1} p_{2}+2 p_{3}\right]. \\
\end{cases}
\end{equation}
By virtue of Lemma \ref{lem-2.011} and \eqref{eq-5.08}, we get
 \begin{equation}\label{eq-5.08+1}
   \begin{aligned}
\left|T_{3}(1)[h]\right| &=\left|1-2 a_{2}^{2}+2 a_{2}^{2} a_{3}-a_{3}^{2}\right| \\
& \leq 1+2\left|a_{2}^{2}\right|+\left|a_{3}\right|\left|a_{3}-2 a_{2}^{2}\right| \\
& \leq 1+ \frac{1}{2}(1-\alpha)^2p_1^2+ \frac{1}{36} (1-\alpha)^2|(1-\alpha)p_1^2+p_2||p_2-2(1-\alpha)p_1^2| \\
&\leq \begin{cases}
      {\frac{1}{9} \big(8 \alpha^4-34 \alpha^3+71 \alpha^2-72 \alpha+36\big) \quad  (-\frac{1}{2}\leq\alpha\leq \frac{1}{2}}),  \\  \\[-12pt]
      {\frac{1}{9} \big(-2 \alpha^3+25 \alpha^2-44 \alpha+30\big)\qquad\quad\ \   (\frac{1}{2}\leq \alpha< 1}).
       \end{cases}
   \end{aligned}
 \end{equation}

In what follows, we shall prove that the equality in \eqref{eq-5.08+1} holds for the function $h$ given by \eqref{eq-5.02+2}.
It follows from \eqref{eq-5.02+2} that

 \begin{equation}\label{eq-5.08+2}
	\begin{cases}
|a_{2}|= 1-\alpha, \\ \\[-12pt]
|a_{3}|=\frac{1}{3}(1-\alpha)(3-2\alpha). \\  \\[-12pt]
%|a_{4}|=\frac{1}{6}(1-\alpha)(2-\alpha)(3-2\alpha). \\
%|a_{5}|=\frac{1}{30}(1-\alpha)(2-\alpha)(3-2\alpha)(5-2\alpha)
\end{cases}
\end{equation}
 Therefore, we obtain
 \begin{equation}\label{eq-5.08+3}
   \begin{aligned}
\left|T_{3}(1)[h]\right| &=\left|1-2 a_{2}^{2}+2 a_{2}^{2} a_{3}-a_{3}^{2}\right| \\
& \leq 1+2\left|a_{2}^{2}\right|+\left|a_{3}\right|\left|a_{3}-2 a_{2}^{2}\right| \\
& = 1+ 2(1-\alpha)^2 + \frac{1}{3} (1-\alpha)  (3-2\alpha) \left| \frac{1}{3} (1-\alpha)  (3-2\alpha)-2(1-\alpha)^2 \right| \\
&=\begin{cases}
      {\frac{1}{9} \big(8 \alpha^4-34 \alpha^3+71 \alpha^2-72 \alpha+36\big) \quad\ \,  (-\frac{1}{2}\leq\alpha\leq \frac{1}{2}}),  \\  \\[-12pt]
      {\frac{1}{9} \big(-2 \alpha^3+25 \alpha^2-44 \alpha+30\big)\qquad\quad\ \   (\frac{1}{2}\leq \alpha< 1}).
       \end{cases}
   \end{aligned}
 \end{equation}

\begin{figure}[!htb]
	\begin{center}
	\includegraphics[width=0.45\linewidth]{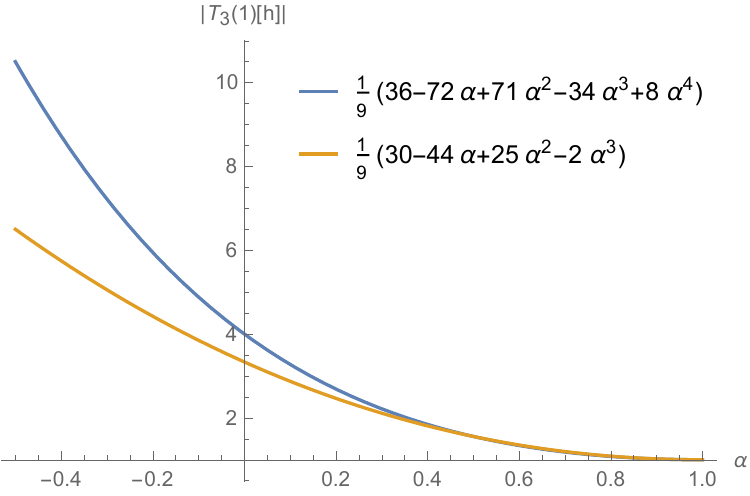}
	\includegraphics[width=0.45\linewidth]{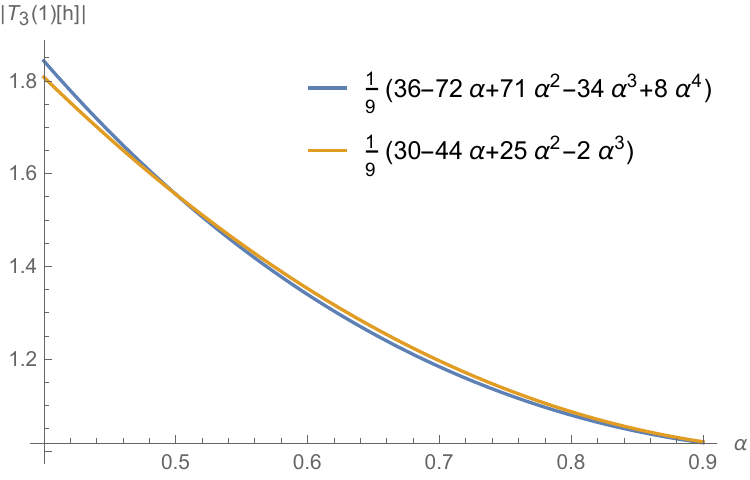}
	\end{center}
	\caption{The left graph is $|T_{3}(1)[h]|$ of $\frac{1}{9} \big(8 \alpha^4-34 \alpha^3+71 \alpha^2-72 \alpha+36\big)$ and $\frac{1}{9} \big(-2 \alpha^3+25 \alpha^2-44 \alpha+30\big)$, respectively;  The right graph  is a locally enlarged version.}
	\label{figure-2.1}
\end{figure}

By the power series representations of $h$ and $g$ for $f=h+\overline{g} \in \mathcal{M}(\alpha,\zeta,1)$, we find that
$$
(k+1) b_{k+1}=\zeta k a_{k} \quad (k\in \mathbb{N};\, |\zeta|\leq 1;\, a_1=1),
$$
which implies that
\begin{equation}\label{eq-5.10}
	\begin{cases}
b_{2} = \frac{1}{2}\zeta a_{1},\\   \\[-12pt]
b_{3} = \frac{2}{3}\zeta a_{2}.\\ \\[-12pt]
%b_{4} = \frac{3}{4}\zeta a_{3}. \\
%	\begin{cases}
%b_{2} \leq \frac{1}{2} a_{1}= \frac{1}{2},\\   \\[-12pt]
%b_{3} \leq \frac{2}{3} a_{2}=\frac{1}{3}\big(1-\alpha\big) p_{1},\\ \\[-12pt]
%b_{4} \leq \frac{3}{4} a_{3}=\frac{1}{8}(1-\alpha)\left[(1-\alpha) p_{1}^{2}+p_{2}\right]. \\
%b_{5}=\frac{4}{5} a_{4}=\frac{1}{30}(1-\alpha)\left[(1-\alpha)^{2} p_{1}^{3}+3(1-\alpha) p_{1} p_{2}+2 p_{3}\right] .
\end{cases}
\end{equation}
Thus, by Lemma \ref{lem-2.010}, \eqref{eq-5.08} and \eqref{eq-5.10}, we deduce that the assertion \eqref{eq-5.06} of Theorem \ref{thm-5.02} is true.
The sharpness of \eqref{eq-5.06} follows from \eqref{eq-5.05}.
\end{proof}

\begin{thm}\label{thm-5.03}
    Let $f\in \mathcal{M}(\alpha,\zeta,2)$. Then
    \begin{equation}\label{eq-5.11}
    |T_{3}(2)[h]|\leq
	\begin{cases}
        {\frac{1}{108} \left(1-\alpha)^3  (2 \alpha^2-7 \alpha+12 )  (10 \alpha^2-27 \alpha+36 \right) \quad  (-\frac{1}{2}\leq \alpha\leq \frac{1}{7}}), \\ \\[-12pt]
        {\frac{5}{108}  \left(1-\alpha)^3  (2 \alpha^2-7 \alpha+12 )  (2 \alpha^2-4 \alpha+7 \right) \qquad\ \,  (\frac{1}{7}\leq \alpha< 1}),
	\end{cases}
    \end{equation}
    and
    \begin{equation}\label{eq-5.12}
        |T_{3}(2)[g]|=|2b_3^2b_4|\leq \frac{1}{243}\left(1-\alpha\right).
    \end{equation}
    The inequality in \eqref{eq-5.11} is sharp for the function $h$ given by \eqref{eq-5.02+2}, and
     the  inequality in \eqref{eq-5.12} is sharp for the function $g$ defined by
\begin{equation}\label{eq-5.17}
g(z)={\int_0^z\frac{\zeta t^2}{(1-\delta t)^{2-2\alpha}}dt} \quad\left(\abs{\delta}=1;\, \abs{\zeta}\leq \frac{1}{3};\, z\in\D\right).
\end{equation}.
\end{thm}

\begin{proof}
Suppose that $f\in \mathcal{M}(\alpha,\zeta,2)$. It follows that $$T_{3}(2)[h]=\left(a_{2}-a_{4}\right)\left(a_{2}^{2}-2 a_{3}^{2}+a_{2} a_{4}\right).$$
In view of \eqref{eq-5.08} and Lemma \ref{lem-2.010}, we find that
    \begin{equation}\label{eq-5.13}
        \begin{aligned}
       \left|a_{2}-a_{4}\right| & \leq\left|a_{2}\right|+\left|a_{4}\right| \\
    &\leq \left|\frac{1}{2}(1-\alpha) p_{1}\right|+ \left|\frac{1}{24}(1-\alpha)\left[(1-\alpha)^{2} p_{1}^{3}+3(1-\alpha) p_{1} p_{2}+2 p_{3}\right]\right|\\
    &\leq \frac{1}{6}\left(1-\alpha)(2\alpha^2-7\alpha +12\right).
        \end{aligned}
    \end{equation}
    Next, we  shall  maximize $\left|a_{2}^{2}-2 a_{3}^{2}+a_{2} a_{4}\right|$.
    With the help of \eqref{eq-5.08},  Lemma \ref{lem-2.010} and  Lemma \ref{lem-2.011}, we get
    \begin{equation}\label{eq-5.14}
        \begin{aligned}
             |a_{2}^{2}-2 a_{3}^{2}+a_{2} a_{4} | &  =\frac{(1-\alpha)^2}{144} |-5(1-\alpha)^2p_1^4+36p_1^2-7(1-\alpha)p_1^2p_2-8p_2^2+6p_1p_3|\\
    &\leq \frac{(1-\alpha)^2}{144} \left[5(1-\alpha)^2|p_1|^4+36|p_1|^2+8|p_2|^2+6|p_1|\abs{p_3-\frac{7}{6}(1-\alpha)p_1p_2}\right]\\
    &\leq  \begin{cases}
        {\frac{1}{18}\big(1-\alpha)^2 \big(10 \alpha^2-27 \alpha+36\big)\quad\ \, (-\frac{1}{2}\leq\alpha\leq \frac{1}{7}}),\\  \\[-12pt]
        {\frac{5}{18} \big(1-\alpha)^2 \big(2 \alpha^2-4 \alpha+7\big) \quad\qquad (\frac{1}{7}\leq \alpha< 1}).
           \end{cases}
        \end{aligned}
    \end{equation}
Therefore, combining  \eqref{eq-5.13} with \eqref{eq-5.14}, we obtain \eqref{eq-5.11}.
By noting that for $f\in \mathcal{M}(\alpha,\zeta,2)$,  we have
\begin{equation}\label{eq-5.10+1}
	\begin{cases}
b_{3} = \frac{1}{3}\zeta a_{1},\\ \\[-12pt]
b_{4} = \frac{1}{2}\zeta a_{2}.\\
%b_{5}=\frac{4}{5} a_{4}=\frac{1}{30}(1-\alpha)\left[(1-\alpha)^{2} p_{1}^{3}+3(1-\alpha) p_{1} p_{2}+2 p_{3}\right] .
\end{cases}
\end{equation} By means of Lemma \ref{lem-2.010}, we get the assertion \eqref{eq-5.12}.
The sharpness of \eqref{eq-5.11} and \eqref{eq-5.12} are similar to that of Theorem \ref{thm-5.02}, we choose to omit the details here.
\end{proof}

\begin{rem}{\rm
By setting $\alpha=0$ in Corollary \ref{cor-5.01}, Theorem \ref{thm-5.02} and Theorem \ref{thm-5.03}, respectively, we get $|T_{2}(2)[h]|\leq 2$, $|T_{3}(1)[h]|\leq 4$ and $|T_{3}(2)[h]|\leq 4$. The bounds for convex functions were recently proved by
    Ali $et\ al.$ \cite[Theorem 2.11]{Ali-Thomas-Vasudevarao-2018}.}
\end{rem}

\section{\large\bf {\sc Bohr inequality for the class $\mathcal{HC}_n(\phi)$}}\label{sec4}
In this section, we firstly give
  the sharp growth estimate for the class $\mathcal{HC}_n(\phi)$.
\begin{prop} \label{prop-4.01}
Let $f \in \mathcal{HC}_n(\phi)$. Then
\begin{equation} \label{eq-3.01}
L(\zeta,n,r) \leq |f(z)| \leq R(\zeta,n,r),
\end{equation}
where
\begin{equation} \label{41}
L(\zeta,n,r)=-K(-r)-|\zeta|\int_{0}^{r} t^n K'(-t)dt,\end{equation}  {and} \begin{equation}\label{42} R(\zeta,n,r)=K(r)+|\zeta|\int_{0}^{r}t^n K'(t)dt.
\end{equation}
The bounds are sharp for the extremal function $f_{\zeta}=h_{\zeta}+\overline{g_{\zeta}}$ with $h_{\zeta}=K$, where $K$ satisfies \eqref{eq-1.02}
or its rotations and $g_{\zeta}$ satisfies $g'_{\zeta}=\zeta z^n h'_{\zeta}$.
\end{prop}

\begin{proof}
	Let $f=h+\overline{g} \in \mathcal{HC}_n(\phi)$. By Lemma \ref{lem-2.05}, we know  that
	\begin{equation} \label{eq-4.01}
		K'(-r) \leq |h'(z)| \leq K'(r) \quad (|z|=r).
	\end{equation}
	Let $\gamma$ be the linear segment joining $0$ to $z$ in $\mathbb{D}$. Then, we see that
	\begin{align} \label{eq-4.02}
		|f(z)|  =\left|\int_{\gamma}\frac{\partial f}{\partial \theta}\,\, d\theta +\frac{\partial f}{\partial \overline{\theta}}\,\, d\overline{\theta} \right| & \leq \int_{\gamma} \left(|h'(\theta)|+|g'(\theta)|\right)\, |d\theta|=\int_{\gamma} \left(1+|\zeta||\theta|^n\right)|h'(\theta)|\, |d\theta|.\\  \nonumber
	\end{align}
	Combining \eqref{eq-4.01} and \eqref{eq-4.02}, we obtain
	\begin{equation} \label{eq-4.03}
		|f(z)|\leq \int_{0}^{r} \left(1+|\zeta|t^n\right)K'(t) \, dt=K(r)+|\zeta|\int_{0}^{r}t^nK'(t)\,\,dt= R(\zeta,n,r).
	\end{equation}
	Let $\Gamma$ be the preimage of the line segment joining $0$ to $f(z)$ under the function $f$. It follows that
	\begin{equation}\label{eq-4.04}
	\begin{aligned}
		|f(z)|=\left|\int_{\Gamma}\frac{\partial f}{\partial \theta}\, d\theta +\frac{\partial f}{\partial \overline{\theta}}\,\, d\overline{\theta} \right|& \geq \int_{\Gamma} \left(|h'(\theta)|-|g'(\theta)|\right)\, |d\theta| \\
	   &=\int_{\Gamma} \left(1-|\zeta||\theta|^n\right)|h'(\theta)|\, |d\theta|.
	\end{aligned}
\end{equation}
	From \eqref{eq-4.01} and \eqref{eq-4.04}, we  have
	\begin{equation} \label{eq-4.05}
		|f(z)|\geq \int_{0}^{r} \left(1-|\zeta|t^n\right)K'(-t) \, dt=-K(-r)-|\zeta|\int_{0}^{r}t^nK'(-t)\, dt=L(\zeta,n,r).
	\end{equation}
	In view of \eqref{eq-4.03} and \eqref{eq-4.05}, we deduce that
	\begin{equation} \label{eq-4.06}
		L(\zeta,n,r) \leq |f(z)| \leq R(\zeta,n,r).
	\end{equation}
	
	To show the sharpness, we consider the function $f_{\zeta}=h_{\zeta}+\overline{g_{\zeta}}$ with $h_{\zeta}=K$ or its rotations.
	It is easy to see that $h_{\zeta}=K \in \mathcal{C}(\phi)$ and $g_{\zeta}$ satisfies $g'_{\zeta}(z)=\zeta z^n h'_{\zeta}(z)$, which shows that $f_{\zeta}\in \mathcal{HC}_n(\phi)$. The equality holds on both sides of \eqref{eq-4.01} for suitable rotations of $K$. For $0\leq \zeta< 1/{(2n-1)}$, we see that $f_{\zeta}(r)=R(\zeta,n,r)$ and $f_{\zeta}(-r)=-L(\zeta,n,r)$. Hence $|f_{\zeta}(r)|=R(\zeta,n,r)$ and $|f_{\zeta}(-r)|=L(\zeta,n,r)$. This completes the proof of Proposition \ref{prop-4.01}.
\end{proof}

\begin{prop} \label{prop-4.02}
Let $f \in \mathcal{HC}_n(\phi)$ and $S_{r}$ be the area of the image $f(\mathbb{D}_{r})$
${\rm (}\mathbb{D}_{r}:=\{z\in \mathbb{C}:|z|< r\leq1\}{\rm)}$.
Then
\begin{equation} \label{eq-3.02}
2\pi \int_{0}^{r}t\left(1-|\zeta|^{2}t^{2n}\right)(K'(-t))^{2}\,dt \leq S_{r} \leq 2\pi \int_{0}^{r}t\left(1-|\zeta|^{2}t^{2n}\right)(K'(t))^{2}\, dt.
\end{equation}
\end{prop}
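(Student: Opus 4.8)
The plan is to compute the area $S_r$ directly from the Jacobian of the harmonic mapping and then control the integrand pointwise using the Ma-Minda distortion bound already established in Lemma \ref{lem-2.05}. The starting point is the standard area formula: since $f=h+\overline{g}$ is sense-preserving with Jacobian $J_f=|h'|^2-|g'|^2$, the area of $f(\mathbb{D}_r)$ (counting multiplicity) is
\begin{equation*}
S_r=\iint_{\mathbb{D}_r}J_f\,dA=\iint_{\mathbb{D}_r}\left(|h'(z)|^2-|g'(z)|^2\right)\,dA.
\end{equation*}
The first key step is to eliminate $g$ using the structural relation \eqref{eq-2.04}, namely $g'(z)=\zeta z^n h'(z)$, which gives $|g'(z)|^2=|\zeta|^2|z|^{2n}|h'(z)|^2$ and hence the clean factorization
\begin{equation*}
J_f=|h'(z)|^2\left(1-|\zeta|^2|z|^{2n}\right).
\end{equation*}

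Next I would pass to polar coordinates $z=te^{i\theta}$, $dA=t\,dt\,d\theta$, writing
\begin{equation*}
S_r=\int_0^r\!\!\int_0^{2\pi}|h'(te^{i\theta})|^2\left(1-|\zeta|^2t^{2n}\right)t\,d\theta\,dt.
\end{equation*}
The factor $1-|\zeta|^2t^{2n}$ is nonnegative on $[0,r]$ because $|\zeta|\le 1/(2n-1)\le 1$ and $t<1$, so the pointwise bounds on $|h'|$ may be inserted without flipping any inequalities. Since $h\in\mathcal{C}(\phi)$, Lemma \ref{lem-2.05} yields $K'(-t)\le|h'(te^{i\theta})|\le K'(t)$ for $|z|=t$, whence
\begin{equation*}
\left(K'(-t)\right)^2\le|h'(te^{i\theta})|^2\le\left(K'(t)\right)^2.
\end{equation*}

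Finally I would integrate these two-sided estimates. Multiplying through by the nonnegative weight $(1-|\zeta|^2t^{2n})t$, integrating over $\theta\in[0,2\pi]$ (which contributes a factor $2\pi$ since the bounds are $\theta$-independent), and then over $t\in[0,r]$ produces exactly
\begin{equation*}
2\pi\int_0^r t\left(1-|\zeta|^2t^{2n}\right)\left(K'(-t)\right)^2dt\le S_r\le 2\pi\int_0^r t\left(1-|\zeta|^2t^{2n}\right)\left(K'(t)\right)^2dt,
\end{equation*}
which is \eqref{eq-3.02}. This argument is essentially routine once the Jacobian is factored; the only point requiring a moment's care is verifying the sign of $1-|\zeta|^2t^{2n}$ so that the Ma-Minda inequalities integrate in the correct direction, and this is immediate from the constraint $|\zeta|\le 1/(2n-1)$ in Definition \ref{d2}. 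I do not expect a serious obstacle here, as the proof reduces to a single application of the known distortion theorem to an explicitly factored integrand.
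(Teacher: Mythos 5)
Your proof is correct and takes essentially the same route as the paper: factor the Jacobian via $g'(z)=\zeta z^{n}h'(z)$ to obtain $J_{f}=\left(1-|\zeta|^{2}|z|^{2n}\right)|h'(z)|^{2}$, then insert the two-sided distortion bound of Lemma \ref{lem-2.05} in polar coordinates. If anything, your version is written slightly more carefully, since you explicitly verify the nonnegativity of the weight $1-|\zeta|^{2}t^{2n}$ before squaring and integrating the bounds, a point the paper passes over (its intermediate display even carries a typo, with $|\alpha|^{2}t^{2}$ in place of $|\zeta|^{2}t^{2n}$).
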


\begin{proof}
	Let $f=h+\overline{g} \in \mathcal{HC}_n(\phi)$. Then, the area of image of $\mathbb{D}_{r}$ under a harmonic mapping $f$ is given by
	\begin{equation} \label{eq-4.07}
		S_{r}=\iint_{\mathbb{D}_{r}} \left(|h'(z)|^{2}-|g'(z)|^{2}\right)\, dx dy= \iint_{\mathbb{D}_{r}} \left(1-|\zeta|^{2}|z|^{2n}\right)|h'(z)|^{2}dxdy.
	\end{equation}	
	Since $h\in \mathcal{C}(\phi)$, in view of \eqref{eq-4.01} and \eqref{eq-4.07}, we have
	\begin{equation} \label{410}
	\int_{0}^{r} \int_{0}^{2\pi}t\left(1-|\alpha|^{2}t^{2}\right)(K'(-t))^{2}d \theta dt \leq S_{r} \leq  \int_{0}^{r}\int_{0}^{2\pi}t\left(1-|\alpha|^{2}t^{2}\right)(K'(t))^{2}d\theta dt.
	\end{equation}	
Therefore, the assertion \eqref{eq-3.02} of Proposition \ref{prop-4.02} follows directly from \eqref{410}.
\end{proof}

In what follows, we derive the Bohr inequality for the class $\mathcal{HC}_n(\phi)$.

\begin{thm} \label{thm-3.04}
Let $f \in \mathcal{HC}_n(\phi)$. Then the majorant series of $f$ satisfies the inequality
\begin{equation} \label{eq-3.03}
|z|+\sum_{n=2}^{\infty} (|a_{n}|+|b_{n}|)|z|^{n} \leq d(f(0),\partial f(\mathbb{D}))
\end{equation}
for $|z|=r\leq \min \{1/3,r_{f}\}$, where $r_{f}$ is the smallest positive root in $(0,1)$ of $$L(\zeta,n,1)=M_{K}(r)+|\zeta|\int_{0}^{r} t^n M_{K'}(t)\,dt,$$
and $L(\zeta,n,1)$ is given by \eqref{41} with $r=1$.
\end{thm}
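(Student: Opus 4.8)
The plan is to split inequality \eqref{eq-3.03} into an upper bound for the majorant series of $f$ and a lower bound for the Euclidean distance $d(f(0),\partial f(\D))$, and then to show that on the stated range of $r$ the former never exceeds the latter. Throughout I write $k$ for the summation index of the Bohr series, reserving the letter $n$ for the class parameter in $g'(z)=\zeta z^n h'(z)$. First I would dispose of the right-hand side. Since $h(0)=g(0)=0$ we have $f(0)=0$, and since $\abs{g'}=\abs{\zeta}\abs{z}^n\abs{h'}<\abs{h'}$ the mapping $f$ is sense-preserving, hence open, so every $w_0\in\partial f(\D)$ is a limit $f(z_j)$ with $\abs{z_j}\to1$. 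By the growth theorem, Proposition \ref{prop-4.01}, $\abs{f(z)}\ge L(\zeta,n,\abs{z})$, and $r\mapsto L(\zeta,n,r)$ is increasing (its derivative is $(1-\abs{\zeta}r^n)K'(-r)>0$); letting $\abs{z_j}\to1$ gives $\abs{w_0}\ge L(\zeta,n,1)$, and taking the infimum over the boundary yields
\[
 d(f(0),\partial f(\D))\ge L(\zeta,n,1).
\]

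Next I would estimate the majorant series. Writing $M_h(r)=r+\sum_{k=2}^{\infty}\abs{a_k}r^k$ and $M_g(r)=\sum_{k=2}^{\infty}\abs{b_k}r^k$ (with $b_1=\dots=b_n=0$ forced by $g'=\zeta z^n h'$), the left-hand side of \eqref{eq-3.03} is exactly $M_h(r)+M_g(r)$. Let $M_{h'}$, $M_{K'}$ denote the majorant series of $h'$ and $K'$; integrating term by term gives $M_h(r)=\int_0^r M_{h'}(t)\,dt$ and $M_K(r)=\int_0^r M_{K'}(t)\,dt$. Because $h\in\mathcal{C}(\phi)$, Lemma \ref{lem-2.04} gives $h'\prec K'$, so the Bhowmik--Das inequality, Lemma \ref{lem-2.07}, yields $M_{h'}(t)\le M_{K'}(t)$ for all $t\le1/3$; integrating over $[0,r]$ with $r\le1/3$ gives $M_h(r)\le M_K(r)$. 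For the co-analytic part, $g'(z)=\zeta z^n h'(z)$ gives $M_{g'}(t)=\abs{\zeta}t^n M_{h'}(t)$, whence $M_g(r)=\abs{\zeta}\int_0^r t^n M_{h'}(t)\,dt\le\abs{\zeta}\int_0^r t^n M_{K'}(t)\,dt$ for $r\le1/3$. Adding the two estimates,
\[
 M_h(r)+M_g(r)\le M_K(r)+\abs{\zeta}\int_0^r t^n M_{K'}(t)\,dt\qquad(r\le 1/3).
\]

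Finally I would invoke the definition of $r_f$. The map $r\mapsto M_K(r)+\abs{\zeta}\int_0^r t^n M_{K'}(t)\,dt$ vanishes at $0$ and increases strictly, and $r_f$ is by definition its first crossing of the level $L(\zeta,n,1)$; hence for $r\le r_f$ the right-hand side of the last display is at most $L(\zeta,n,1)$. Combining with the distance bound, for $\abs{z}=r\le\min\{1/3,r_f\}$ I obtain
\[
 r+\sum_{k=2}^{\infty}\left(\abs{a_k}+\abs{b_k}\right)r^k=M_h(r)+M_g(r)\le L(\zeta,n,1)\le d(f(0),\partial f(\D)),
\]
which is precisely \eqref{eq-3.03}.

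The decisive step, and the source of the cap $r\le1/3$, is the passage from the subordination $h'\prec K'$ to the majorant inequality $M_{h'}\le M_{K'}$, which is exactly Lemma \ref{lem-2.07} and is valid only for $t\le1/3$. I would take care that the term-by-term integrations are legitimate by absolute convergence inside $\D$, and that the pointwise bound $M_{h'}(t)\le M_{K'}(t)$ holds on the whole interval $[0,r]$ rather than merely at its right endpoint; once this is secured, identifying the threshold radius with the root $r_f$ is just the monotonicity of the majorizing integral, and the remainder is bookkeeping.
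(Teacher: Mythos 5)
Your proposal is correct and takes essentially the same route as the paper's proof: subordination $h'\prec K'$ from Lemma \ref{lem-2.04}, the Bhowmik--Das majorant inequality of Lemma \ref{lem-2.07} (the sole source of the $1/3$ cap), term-by-term integration to get $M_h(r)\le M_K(r)$ and $M_g(r)\le |\zeta|\int_0^r t^n M_{K'}(t)\,dt$, and the growth estimate of Proposition \ref{prop-4.01} to obtain $d(f(0),\partial f(\mathbb{D}))\ge L(\zeta,n,1)$. The only piece of the paper's argument you gloss over is the verification that $r_f$ actually exists in $(0,1)$, which the paper supplies by an intermediate-value argument showing $H_1(n,0)<0<H_1(n,1)$; since the theorem statement presupposes $r_f$ and your strict-monotonicity observation for $R_{\mathcal{C}}(n,\cdot)$ correctly yields $R_{\mathcal{C}}(n,r)\le L(\zeta,n,1)$ for $r\le r_f$, this omission is harmless, and your added details (openness of $f$ and monotonicity of $L(\zeta,n,\cdot)$ in justifying the distance bound) are sound refinements of the paper's $\liminf$ step.
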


\begin{proof}
	Let $f=h+\overline{g} \in \mathcal{HC}_n(\phi)$. Since $h \in \mathcal{C}(\phi)$, from Lemma \ref{lem-2.04}, we know that
	\begin{equation} \label{eq-4.08}
		h' \prec K'.
	\end{equation}
	Let $K(z)=z+\sum \limits_{n=2}^{\infty} k_{n}z^{n}$.
	In view of Lemma \ref{lem-2.07} and \eqref{eq-4.08}, we have
	\begin{equation}\label{eq-4.09}
		1+\sum\limits_{n=2}^{\infty} n|a_{n}|r^{n-1}=M_{h'}(r) \leq M_{K'}(r)=1+\sum\limits_{n=2}^{\infty} n|k_{n}|r^{n-1}
	\end{equation}
	for $|z|=r\leq 1/3$. By integrating \eqref{eq-4.09} with respect to $r$ from $0$ to $r$, we get
	\begin{equation} \label{eq-4.10}
		M_{h}(r)=r+\sum\limits_{n=2}^{\infty} |a_{n}|r^{n} \leq r+\sum\limits_{n=2}^{\infty} |k_{n}|r^{n}=M_{K}(r)\quad (r\leq 1/3).
	\end{equation}
	From the definition of $\mathcal{HC}_n(\phi)$, we know that $$g'(z)=\zeta z^n h'(z).$$ This relationship along with \eqref{eq-4.09} yields
	\begin{equation} \label{eq-4.11}
		\sum\limits_{n=2}^{\infty} n|b_{n}|r^{n-1}=M_{g'}(r)=|\zeta|r^n M_{h'}(r) \leq |\zeta|r^n M_{K'}(r)\quad (r\leq 1/3).
	\end{equation}
	By integrating \eqref{eq-4.11} with respect to $r$ from $0$ to $r$, it follows  that
	\begin{equation} \label{eq-4.12}
		M_{g}(r)=\sum\limits_{n=2}^{\infty} |b_{n}|r^{n} \leq |\zeta| \int_{0}^{r} t^n M_{K'}(t)dt\quad (r\leq 1/3).
	\end{equation}
	Therefore, for $|z|=r\leq 1/3$, from \eqref{eq-4.10} and \eqref{eq-4.12},  we obtain
	\begin{equation} \label{eq-4.13}
		M_{f}(r)=|z|+\sum_{n=2}^{\infty} (|a_{n}|+|b_{n}|)r^{n} \leq M_{K}(r)+ |\zeta| \int_{0}^{r} t^n M_{K'}(t)dt=R_{\mathcal{C}}(n,r).
	\end{equation}
In view of \eqref{eq-3.01}, it is evident that the Euclidean distance between $f(0)$ and the boundary of $f(\mathbb{D})$ is given by
	\begin{equation} \label{eq-4.14}
		d(f(0), \partial f(\mathbb{D}))= \liminf \limits_{|z|\rightarrow 1} |f(z)-f(0)| \geq L(\zeta,n,1).
	\end{equation}
	We note that $R_{\mathcal{C}}(n,r) \leq L(\zeta,n,1)$ whenever $r \leq r_{f}$, where $r_{f}$ is the smallest positive root of $R_{\mathcal{C}}(n,r)=L(\zeta,n,1)$ in $(0,1)$. Let $$H_{1}(n,r)=R_{\mathcal{C}}(n,r)-L(\zeta,n,1).$$ Then $H_{1}(n,r)$ is a continuous function in $[0,1]$. Since $$M_{K}(r) \geq K(r)>-K(-r),$$ it follows that
\begin{equation}\label{eq-4.15}
		\begin{aligned}
		H_{1}(n,1)&=R_{\mathcal{C}}(n,1)-L(\zeta,n,1)\\
		&=  M_{K}(1)+K(-1)+|\zeta|\int_{0}^{r} t^n \left(M_{K'}(t)+K'(t)\right)dt \\
		&\geq  K(1)+K(-1)+|\zeta|\int_{0}^{r} t^n \left(M_{K'}(t)+K'(t)\right)dt>0.
	\end{aligned}
\end{equation}
	On the other hand,
	\begin{equation} \label{eq-4.16}
		H_{1}(n,0)=-L(\zeta,n,1)=K(-1)(1-|\zeta|)+n|\zeta|\int_{0}^{1}t^{n-1}K(-t)\,  dt<0.
	\end{equation}
	Therefore, $H_{1}$ has a root in $(0,1)$. Let $r_{f}$ be the smallest root of $H_{1}$ in $(0,1)$. Then $R_{\mathcal{C}}(n,r)\leq L(\zeta,n,1)$ for $r\leq r_{f}$.	Now, in view of the inequalities \eqref{eq-4.13} and \eqref{eq-4.14} with the relationship $R_{\mathcal{C}}(n,r)\leq L(\zeta,n,1)$ for $r\leq r_{f}$, we obtain
	$$
	|z|+\sum_{n=2}^{\infty} (|a_{n}|+|b_{n}|)r^{n} \leq d(f(0), \partial f(\mathbb{D}))
	$$
	for $|z|=r\leq \min \{1/3, r_{f}\}$.
\end{proof}

For a particular choice of $\phi$ in Theorem \ref{thm-3.04}, we get the following result.

\begin{cor} \label{thm-3.04a}
	Let $f \in \mathcal{M}(\alpha,\zeta,n)$  with $0\leq\alpha< 1$ and $0 \leq \zeta <1/(2n-1)$. Then the inequality \eqref{eq-3.03} holds for $|z|=r \leq r_{f}$, where $r_{f}$ is the smallest root in $(0,1)$ of $$F_{n}(r):=R(\alpha,\zeta,n,r)-L(\alpha,\zeta,n,1)=0.$$ The radius $r_{f}$ is sharp.
\end{cor}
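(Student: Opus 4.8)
The plan is to identify $\mathcal{M}(\alpha,\zeta,n)$ with the particular instance of $\mathcal{HC}(\phi)$ coming from the Ma-Minda function $\phi(z)=\frac{1+(1-2\alpha)z}{1-z}$. This $\phi$ maps $\mathbb{D}$ conformally onto the half-plane $\{\operatorname{Re}w>\alpha\}$, so the subordination $1+zh''/h'\prec\phi$ defining $\mathcal{C}(\phi)$ is exactly the condition \eqref{eq-2.03}; hence $\mathcal{C}(\phi)=\mathcal{K}(\alpha)$ and $\mathcal{HC}(\phi)=\mathcal{M}(\alpha,\zeta,n)$. For this $\phi$ the extremal function \eqref{eq-1.02} integrates explicitly to $K(z)=\int_0^z(1-t)^{2\alpha-2}\,dt$, so that $K'(z)=(1-z)^{2\alpha-2}$ and the Taylor coefficients of $K$ are $k_m=\frac{1}{m!}\prod_{j=2}^m(j-2\alpha)$. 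Since $j-2\alpha>0$ for $j\ge2$ and $\alpha<1$, every $k_m$ is nonnegative, whence $M_K(r)=K(r)$ and $M_{K'}(r)=K'(r)$; a short computation then shows that $R(\alpha,\zeta,n,r)$ and $L(\alpha,\zeta,n,r)$ are precisely the bounds $\Psi(r;\alpha,\zeta,n)$ and $\Phi(r;\alpha,\zeta,n)$ of Lemma \ref{lem-2.02}.

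The decisive point---and the reason the admissible radius is $r_f$ rather than $\min\{1/3,r_f\}$ as in Theorem \ref{thm-3.04}---is that for this subclass I can dispense with the subordination--majorant Lemma \ref{lem-2.07}, which was the sole source of the $r\le1/3$ restriction. In its place I use the sharp coefficient estimates of Lemma \ref{lem-2.01}: the bounds $\abs{a_m}\le k_m$ and the companion bounds on $\abs{b_m}$ from \eqref{e+2.02} are nothing but the coefficients of $K$, and since the $k_m$ are nonnegative, summing these estimates term by term yields
\begin{equation*}
M_h(r)\le M_K(r)=K(r)\quad\text{and}\quad M_g(r)\le|\zeta|\int_0^r t^n K'(t)\,dt\qquad(0<r<1),
\end{equation*}
valid on the whole disk. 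Adding, $M_f(r)=r+\sum_{m\ge2}(\abs{a_m}+\abs{b_m})r^m\le R(\alpha,\zeta,n,r)$ for every $r\in(0,1)$.

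Next I would run the Euclidean-distance and intermediate-value argument of Theorem \ref{thm-3.04} verbatim. As $f(0)=0$, Proposition \ref{prop-4.01} gives $d(f(0),\partial f(\mathbb{D}))=\liminf_{|z|\to1}\abs{f(z)}\ge L(\alpha,\zeta,n,1)$. Put $F_n(r)=R(\alpha,\zeta,n,r)-L(\alpha,\zeta,n,1)$. Then $F_n(0)=-L(\alpha,\zeta,n,1)<0$, while the inequality $K(1)>-K(-1)$ used in \eqref{eq-4.15} gives $F_n(r)>0$ for $r$ close to $1$ (the value $R(\alpha,\zeta,n,1)$ being positive, possibly $+\infty$ when $\alpha\le1/2$). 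By continuity $F_n$ has a smallest root $r_f\in(0,1)$, and $F_n(r)\le0$, i.e. $M_f(r)\le L(\alpha,\zeta,n,1)\le d(f(0),\partial f(\mathbb{D}))$, for $r\le r_f$; this is \eqref{eq-3.03}.

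For the sharpness I would test the extremal mapping $f_{\alpha,\zeta,n}$ of Lemma \ref{lem-2.02}, for which every estimate $\abs{a_m}\le k_m$ and every bound on $\abs{b_m}$ is attained, so that $M_f(r)=R(\alpha,\zeta,n,r)$ identically; moreover $\abs{f_{\alpha,\zeta,n}(-r)}=\Phi(r;\alpha,\zeta,n)\to L(\alpha,\zeta,n,1)$, forcing $d(f(0),\partial f(\mathbb{D}))=L(\alpha,\zeta,n,1)$. For this function the Bohr inequality reduces exactly to $F_n(r)\le0$, which fails once $r>r_f$ by the choice of $r_f$ as the smallest root, so $r_f$ cannot be enlarged. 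The only genuinely delicate steps are verifying $R(\alpha,\zeta,n,1)>L(\alpha,\zeta,n,1)$ (existence of the root) and the bookkeeping that identifies the summed sharp coefficient bounds with $R$ and $L$ so as to remove the $1/3$ barrier; the remainder is the continuity/intermediate-value template already established for Theorem \ref{thm-3.04}.
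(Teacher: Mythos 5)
Your proposal is correct and follows essentially the same route as the paper's own proof: both replace the subordination--majorant lemma (the sole source of the $1/3$ restriction in Theorem \ref{thm-3.04}) by the sharp coefficient bounds of Lemma \ref{lem-2.01}, bound $d(f(0),\partial f(\mathbb{D}))$ from below by $L(\alpha,\zeta,n,1)$ via the growth theorem (Lemma \ref{lem-2.02}), locate $r_f$ by continuity of $F_n$, and prove sharpness by testing the extremal mapping $f_{\alpha,\zeta,n}$. Your explicit identification of $\mathcal{M}(\alpha,\zeta,n)$ with $\mathcal{HC}(\phi)$ for the half-plane function $\phi(z)=\bigl(1+(1-2\alpha)z\bigr)/(1-z)$, with $K'(z)=(1-z)^{2\alpha-2}$ and nonnegative coefficients $k_m$, is a correct and slightly more detailed packaging of the same argument, not a different method.
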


\begin{proof}
	From Lemma \ref{lem-2.02},
	the Euclidean distance between $f(0)$ and the boundary of $f(\mathbb{D})$ shows that
	\begin{equation}\label{eq-4.17}
		d(f(0), \partial f(\mathbb{D}))= \liminf \limits_{|z|\rightarrow 1} |f(z)-f(0)|\geq L(\alpha,\zeta,n,1).
	\end{equation}
	We note that $r_{f}$ is the root of the equation $R(\alpha,\zeta,n,r)=L(\alpha,\zeta,n,1)$ in $(0,1)$. The existence of the root is ensured by the relation $R(\alpha,\zeta,n,1) >L(\alpha,\zeta,n,1)$ with \eqref{eq-2.05}. For $0<r\leq r_{f}$, it is evident that $R(\alpha,\zeta,n,r)\leq L(\alpha,\zeta,n,1)$. In view of Lemma \ref{lem-2.01} and \eqref{eq-4.17}, for $|z|=r\leq r_{f}$, we have
	\begin{align*}
		|z|+\sum\limits_{n=2}^{\infty} (|a_{n}|+|b_{n}|)|z|^{n}
		&\leq r_{f} + (|a_{2}|+|b_{2}|)r_{f}^{2}+\sum\limits_{n=3}^{\infty} (|a_{n}|+|b_{n}|)r_{f}^{n}\\
		& =R(\alpha,\zeta,n,r_{f}) \leq  L(\alpha,\zeta,n,1) \leq d(f(0), \partial f(\mathbb{D})).
	\end{align*}
	To show the sharpness of the radius $r_{f}$, we consider the function $f=f_{\alpha,\zeta,n}$, which is defined in Lemma \ref{lem-2.02}. We see that $f_{\alpha,\zeta,n}$ belongs to $\mathcal{M}(\alpha,\zeta,n)$. Since the left side of the growth inequality in Lemma \ref{lem-2.02} holds for $f=f_{\alpha,\zeta,n}$ or its rotations, we have $d(f(0), \partial f(\mathbb{D}))=L(\alpha,\zeta,n,1)$. Therefore, the function $f=f_{\alpha,\zeta,n}$ for $|z|=r_{f}$ gives
	\begin{align*}
		|z|+\sum\limits_{n=2}^{\infty} (|a_{n}|+|b_{n}|)|z|^{n}
		&= r_{f} + (|a_{2}|+|b_{2}|)r_{f}^{2}+\sum\limits_{n=3}^{\infty} (|a_{n}|+|b_{n}|)r_{f}^{n}\\
		& =R(\alpha,\zeta,n,r_{f}) = L(\alpha,\zeta,n,1) =d(f(0), \partial f(\mathbb{D})),
	\end{align*}
	which reveals that the radius $r_{f}$ is the best possible. 	
\end{proof}

The roots $r_{f}$ of $F_{n}(r)=0$ for different values of $\alpha$, $\zeta$ and $n$ have been shown in Table \ref{ftab-1}, Table \ref{ftab-2}    %\ref{tab1},\ref{tab2}
and Figure \ref{figure-1}, respectively.

\begin{figure}[!htb]
	\begin{center}
		\includegraphics[width=0.45\linewidth]{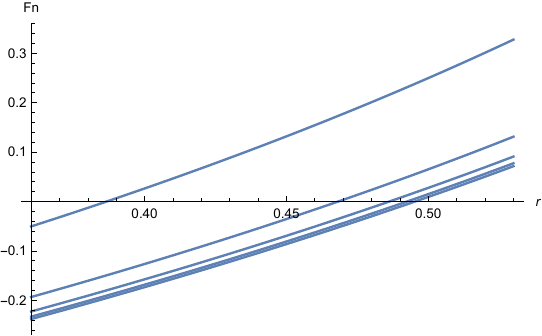}
	\includegraphics[width=0.45\linewidth]{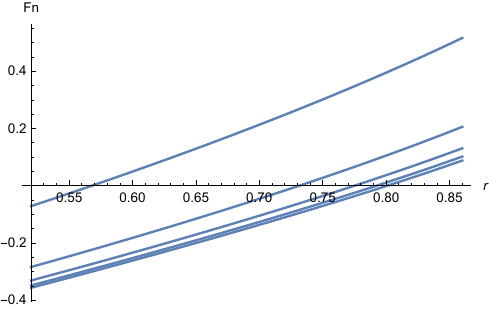}
	\end{center}
	\caption{The graphs of $F_{n}(r)$, respectively, for $\alpha=0.5$ and $\alpha=0.9$ when $n=1,2,3,4,5$.}
	\label{figure-1}
\end{figure}

\begin{table}[ht]
	\begin{tabular}{|l|l|l|l|l|l|l|l|l|}
		\hline
		$n$ & $1$ & $2$ & $3$ & $4$ & $5$ & $10$ & $100$& $1000$  \\
		\hline
		$\zeta$ & $1/2$ & $1/4$ & $1/6$& $1/8$ & $1/10$ & $1/20$ & $1/200$ & $1/2000$   \\
		\hline
		$r_{f}$ & {\small $0.386555$} & {\small $0.468176$} & {\small$0.486196$} & {\small $0.492459$} & {\small $0.495252$} & {\small $0.498809$} & {\small $0.499988$} & {\small $0.500000$} \\
\hline
	\end{tabular}
	\vspace{1mm}	
	\caption{The roots $r_{f}$ of $F_{n}(r)=0$ for different values of $\zeta$ when $\alpha=0.5$.}
	\label{ftab-1}
\end{table}

\begin{table}[ht]\label{tab2}
	\begin{tabular}{|l|l|l|l|l|l|l|l|l|}
			\hline
		$n$ & $1$ & $2$ & $3$ & $4$ & $5$ & $10$ & $100$& $1000$  \\
		\hline
		$\zeta$ & $1/2$ & $1/4$ & $1/6$& $1/8$ & $1/10$ & $1/20$ & $1/200$ & $1/2000$   \\
		\hline
		$r_{f}$ & {\small $0.567721$} & {\small $0.731273$} & {\small $0.774894$} & {\small $0.792253$ } & {\small $0.800709$} & {\small $0.812036$} & {\small $0.815292$} & {\small $0.815323$} \\
		\hline
	\end{tabular}
	\vspace{1mm}	
	\caption{The roots $r_{f}$ of $F_{n}(r)=0$ for different values of $\zeta$ when $\alpha=0.9$.}
	\label{ftab-2}
\end{table}

\begin{rem}{\rm
For $n=1$ and $\abs{\zeta}\leq1$, $r_{f}$ can be found in \cite{Allu-Halder-2020-2}.
For $\alpha=0.5$, when $n\rightarrow\infty$, the sharp radius is 0.500000.
For $\alpha=0.9$, when $n\rightarrow\infty$, the sharp radius is 0.815323.
For  $n=1$, when $\alpha\rightarrow1$, the sharp radius is 0.645751.
}
\end{rem}

Now, we give an improved version of Bohr inequality for the class $\mathcal{HC}_n(\phi)$. By adding area
quantity ${S_{r}}/{(2 \pi)}$ with the majorant series of $f \in \mathcal{HC}_n(\phi)$, the sum is still less
than $d(f(0),\partial f(\mathbb{D}))$ for some radius $r \leq \min \{1/3,\widetilde{r}_{f}\}<1$.

Note that the additional term such as ${S_{r}}/{(2 \pi)}$ to the majorant sum was first mooted by Kayumov and Ponnusamy \cite{Kayumov-Ponnusamy-2020} to refine and improve the Bohr inequality. This variation of Bohr inequality was proved for harmonic mappings in  \cite{Evdoridis-Ponnusamy-Rasila-2019}. Subsequently, several extensions were made by many authors (cf. \cite{Liu-Liu-Ponnusamy-2021}).

\begin{thm} \label{thm-3.05}
	Let $f \in \mathcal{HC}_n(\phi)$ and $S_{r}$ be the area of the image $f(\mathbb{D}_{r})$. Then the inequality
	$$
	M_{f}(r)+ \frac{S_{r}}{2\pi} \leq d(f(0),\partial f(\mathbb{D}))
	$$
holds for $|z|=r\leq  \min \{1/3,\widetilde{r}_{f}\}$, where $\widetilde{r}_{f}$ is the smallest positive root in $(0,1)$  of $$L(\zeta,n,1)
	=M_{K}(r)+|\zeta|\int_{0}^{r} t^n M_{K'}(t)\, dt + \int_{0}^{r}t\left(1-|\zeta|^{2}t^{2n}\right)(K'(t))^{2} dt,$$ and $L(\zeta,n,1)$ is given by \eqref{41} with $r=1$.
\end{thm}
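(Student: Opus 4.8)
The plan is to follow the scheme of the proof of Theorem~\ref{thm-3.04} and supplement it with the sharp area estimate from Proposition~\ref{prop-4.02}, so that the extra summand $S_r/2\pi$ is controlled by a single additional integral. The whole argument then reduces to adding one nonnegative integral to the majorant bound and re-running the intermediate value argument used to locate the Bohr radius.

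First I would reuse the majorant bounds already derived in the proof of Theorem~\ref{thm-3.04}. Writing $f=h+\overline{g}\in\mathcal{HC}(\phi)$, the subordination $h'\prec K'$ (Lemma~\ref{lem-2.04}) together with Lemma~\ref{lem-2.07} gives $M_{h'}(r)\le M_{K'}(r)$ for $r\le 1/3$; integrating and using $g'(z)=\zeta z^n h'(z)$ as in \eqref{eq-4.10} and \eqref{eq-4.12} yields
\[
M_f(r)\le M_K(r)+|\zeta|\int_0^r t^n M_{K'}(t)\,dt\qquad(r\le 1/3).
\]
Next I would invoke Proposition~\ref{prop-4.02}, whose upper bound gives
\[
\frac{S_r}{2\pi}\le\int_0^r t\left(1-|\zeta|^2 t^{2n}\right)(K'(t))^2\,dt.
\]
Adding the two displays produces, for $r\le 1/3$,
\[
M_f(r)+\frac{S_r}{2\pi}\le \widetilde{R}(n,r):=M_K(r)+|\zeta|\int_0^r t^n M_{K'}(t)\,dt+\int_0^r t\left(1-|\zeta|^2 t^{2n}\right)(K'(t))^2\,dt,
\]
which is precisely the right-hand side appearing in the equation defining $\widetilde{r}_f$.

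To close the argument I would compare $\widetilde{R}(n,r)$ with $L(\zeta,n,1)$. By the growth estimate of Proposition~\ref{prop-4.01}, exactly as in \eqref{eq-4.14}, the Euclidean distance satisfies $d(f(0),\partial f(\mathbb{D}))\ge L(\zeta,n,1)$. Set $H(n,r):=\widetilde{R}(n,r)-L(\zeta,n,1)$, a continuous function on $[0,1]$. Since $\widetilde{R}(n,0)=0$ and $L(\zeta,n,1)>0$, we have $H(n,0)<0$; on the other hand, using $M_K(1)\ge K(1)>-K(-1)$ as in \eqref{eq-4.15} together with the nonnegativity of the new integrand $t(1-|\zeta|^2 t^{2n})(K'(t))^2$ on $[0,1]$ (valid since $|\zeta|<1$), one gets $H(n,1)>0$. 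The intermediate value theorem then furnishes a root of $H$ in $(0,1)$; let $\widetilde{r}_f$ be the smallest such root. For $r\le\widetilde{r}_f$ we have $\widetilde{R}(n,r)\le L(\zeta,n,1)$, and combining this with the displays above gives $M_f(r)+S_r/2\pi\le L(\zeta,n,1)\le d(f(0),\partial f(\mathbb{D}))$ for all $|z|=r\le\min\{1/3,\widetilde{r}_f\}$.

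The only genuinely new ingredient relative to Theorem~\ref{thm-3.04} is the handling of the area term, but since Proposition~\ref{prop-4.02} already supplies the correct sharp integral bound and that integrand is nonnegative, the sign analysis at $r=1$ is if anything easier than before; I therefore expect no serious obstacle, the main care being simply to carry along the $1/3$ restriction inherited from Lemma~\ref{lem-2.07} together with the new root $\widetilde{r}_f$ when taking the final minimum.
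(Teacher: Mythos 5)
Your proposal is correct and takes essentially the same route as the paper's own proof: you combine the majorant bound $M_f(r)\leq M_K(r)+|\zeta|\int_0^r t^n M_{K'}(t)\,dt$ from the proof of Theorem \ref{thm-3.04} with the area upper bound of Proposition \ref{prop-4.02}, then run the identical intermediate value argument on $H(n,r)=\widetilde{R}(n,r)-L(\zeta,n,1)$, with the same sign analysis at $r=0$ and $r=1$ (the paper's $H_2$), and the distance lower bound \eqref{eq-4.14}. There is no gap; even your remark that the nonnegativity of the added integrand makes the endpoint analysis at $r=1$ easier matches the paper's treatment, which deduces $H_2(n,1)>0$ from \eqref{eq-4.15} plus positivity of the area integral.
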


\begin{proof}
	Let $f \in \mathcal{HC}_n(\phi)$ be of the form \eqref{eq-1.01}. Then, from the right hand inequality in \eqref{eq-3.02} and \eqref{eq-4.13}, we obtain
\begin{equation}\label{eq-4.18}
	\begin{aligned}
		M_{f}(r)+ \frac{S_{r}}{2\pi} &\leq M_{K}(r)+|\zeta|\int_{0}^{r} t^n M_{K'}(t)\,  dt + \int_{0}^{r}t\left(1-|\zeta|^{2}t^{2n}\right)(K'(t))^{2}\, dt \\
		&= R_{\mathcal{C}}(n,r) +\int_{0}^{r}t\left(1-|\zeta|^{2n}t^{2}\right)(K'(t))^{2}\,  dt =\widetilde{R}_{f}(n,r)
	\end{aligned}
\end{equation}
	for $r\leq 1/3$. Suppose that $H_{2}(n, r)=\widetilde{R}_{f}(n,r)-L(\zeta,n,1)$. Then $H_{2}(n,r)$ is a continuous function in $[0,1]$. The inequality \eqref{eq-4.16} yields that $H_{2}(n,0)=-L(\zeta,n,1)<0$. By virtue of  \eqref{eq-4.15}, we get
	\begin{equation}\label{eq-4.19}
		R_{\mathcal{C}}(n,1)-L(\zeta,n,1)>0.
	\end{equation}
For $|\zeta|\leq1/(2n-1)$, we observe that $$t\left(1-|\zeta|^{2}t^{2n}\right)(K'(t))^{2}\geq 0,$$ and hence
	\begin{equation}\label{eq-4.20}
		\int_{0}^{r}t\left(1-|\zeta|^{2}t^{2n}\right)(K'(t))^{2}dt\geq0.
	\end{equation}
	From \eqref{eq-4.18} and \eqref{eq-4.19}, we obtain
	$$
	H_{2}(n,1)=R_{\mathcal{C}}(n,1)-L(\zeta,n,1) + \int_{0}^{1}t\left(1-|\zeta|^{2}t^{2n}\right)(K'(t))^{2}\,  dt>0.
	$$
	Since $H_{2}(n,0)<0$ and $H_{2}(n,1)>0$, $H_{2}$ has a root in $(0,1)$ and choose $\widetilde{r}_{f}$ to be the smallest root in $(0,1)$, we know that $\widetilde{R}_{f}(n,r)\leq L(\zeta,n,1)$ for $r \leq \widetilde{r}_{f}$. Therefore, by virtue of \eqref{eq-4.14} and \eqref{eq-4.18}, we conclude that
	$$
	M_{f}(r)+ \frac{S_{r}}{2\pi}\leq d(f(0), \partial f(\mathbb{D}))
	$$
	for $r \leq \min \{1/3, \widetilde{r}_{f}\}$.		
\end{proof}

\begin{rem}{\rm
By setting $n=1$ in Theorems \ref{thm-3.04} and \ref{thm-3.05}, we get the corresponding results obtained in \cite{Allu-Halder-2020-2}.}
\end{rem}

\vskip .20in
\begin{center}{\sc Acknowledgments}
\end{center}

\vskip.01in
The present investigation was supported by the \textit{Key Project of Education Department of Hunan Province} under Grant no.
19A097, and the \textit{Natural Science Foundation of Hunan Province} under Grant no. 2018JJ2074 of the P. R. China. The authors would like to thank the referees for their valuable comments and suggestions, which was essential to improve the quality of this paper.

\vskip .20in
\begin{center}{\sc Data availability statement}
\end{center}
\vskip.01in

No data, models, or code were generated or used during the study (e.g. opinion
or dateless paper).

\vskip .20in
\begin{center}{\sc Conflict of interest}
\end{center}
\vskip.01in

The authors declare that they have no conflict of interest.

%\begin{center}{\large\bf Data Availability}
%\end{center}
%\vskip.05in
%No data were used to support this study.

%\begin{center}{\large\bf Conflicts of Interest}
%\end{center}
%\vskip.05in
%The authors declare that they have no conflicts of interest.

%\begin{center}{\large\bf Authors' Contributions}
%\end{center}
%\vskip.05in
%All authors contributed equally to writing of this paper. All authors read and approved %the final manuscript.

% \vskip .20in
%\begin{center}{\large\sc Acknowledgments}
%\end{center}

%\vskip.05in
%The present investigation was supported by the \textit{Youth Science Fund Research Project of Pingxiang University} under Grant no. 2019D0202,
%the \textit{Scientific Research Fund of Jiangxi Provincial Department of Education} under Grant no. GJJ191157, the
%\textit{Postgraduate Research and Practice Innovation Program of Jiangsu Province}  under Grant no. KYCX21$_{-}0247$, and the \textit{Key Project of
%Education Department of Hunan Province} under Grant no. 19A097 of the P. R.
%China.

\vskip.20in

\end{document}